\newcommand{\rrVert}{\Vert}
\newcommand{\rrvert}{\vert}
\newcommand{\llVert}{\Vert}
\newcommand{\llvert}{\vert}
\newtheorem{theorem}{Theorem}
\newtheorem{lemma}{Lemma}
\begin{document}
\begin{frontmatter}

\title{Statistical convergence of Markov experiments to diffusion limits}
\runtitle{Statistical convergence of Markov experiments}

\begin{aug}
\author[1]{\fnms{Valentin} \snm{Konakov}\thanksref{1}\ead[label=e1]{VKonakov@hse.ru}},
\author[2]{\fnms{Enno} \snm{Mammen}\corref{}\thanksref{2}\ead[label=e2]{emammen@rumms.uni-mannheim.de}} \and
\author[3]{\fnms{Jeannette} \snm{Woerner}\thanksref{3}\ead[label=e3]{jeannette.woerner@math.uni-dortmund.de}}
\runauthor{V. Konakov, E. Mammen and J. Woerner} 
\address[1]{Higher School of Economics, Pokrovskii Boulevard 11,
103012 Moscow, Russia. \\\printead{e1}}
\address[2]{Department
of Economics, University of Mannheim, L7,3-5, 68229 Mannheim,
Germany. \\\printead{e2}}
\address[3]{Technische Universit\"at Dortmund,
Fakult\"at f\"ur Mathematik,
Vogelpothsweg 87,
44227 Dortmund, Germany. \printead{e3}}
\end{aug}

\received{\smonth{1} \syear{2012}}
\revised{\smonth{11} \syear{2012}}

%
\begin{abstract}
Assume that one observes the $k$th, $2k$th$,\ldots,nk$th value of a
Markov chain $X_{1,h},\ldots ,X_{nk,h}$. That means we assume that a high
frequency Markov chain runs in the background on a very fine time
grid but that it is only observed on a coarser grid. This
asymptotics reflects a set up occurring in the high frequency
statistical analysis for financial data where diffusion
approximations are used only for coarser time scales. In this paper,
we show that under appropriate conditions the L$_1$-distance between
the joint distribution of the
Markov chain and the distribution of the discretized diffusion
limit converges to zero. The result implies that the LeCam
deficiency distance between the statistical Markov experiment and
its diffusion limit converges to zero. This result can be applied to
Euler approximations for the joint distribution
of diffusions observed at points $\Delta, 2 \Delta, \ldots , n\Delta$.
The joint distribution can be approximated by
generating Euler approximations at the points $\Delta k^{-1}, 2 \Delta
k^{-1}, \ldots , n\Delta$. Our result implies
that under our regularity conditions the Euler approximation is
consistent for $n \to\infty$ if $nk^{-2}\to0$.
\end{abstract}

%
\begin{keyword}
\kwd{deficiency distance}
\kwd{diffusion processes}
\kwd{Euler approximations}
\kwd{high frequency time series}
\kwd{Markov chains}
\end{keyword}

\end{frontmatter}

\section{Introduction}\label{sec:Intro}
In this paper, we consider
approximations of the joint distribution of a partially observed Markov
chain by the law of a discretely observed diffusion. More precisely we
consider a Markov
chain $X_{1,h},\ldots ,X_{nk,h}$ with values at $nk$ time points. This time
points are equal to $h,2h,\ldots ,nkh$ where $h$ is a time interval that
converges to zero. We assume that this process is only observed at each
$k$th point, that is, at the time points $kh, 2kh,\ldots ,nkh$. That
means we assume that a high frequency Markov chain runs in the
background on a very fine time grid but that it is only observed on
a coarser grid. This asymptotics reflects a set up occurring in the
high frequency statistical analysis for financial data where
diffusion approximations are used for coarser time scales. For
the finest scale, discrete pattern in the price processes become
transparent that could not be modeled by diffusions. The joint
distribution of the observed values of the Markov chain is denoted
by $P_h$. We assume that this joint distribution can be approximated
by the distribution of $(Y^*_1,\ldots ,Y^*_n)$ where $Y^*_1,\ldots ,Y^*_n$ are the
values of a diffusion $Y$ on the equidistant grid $kh,2kh,\ldots ,nkh$,
that is, $Y(ikh) = Y^*_i$. The joint distribution of
$(Y^*_1,\ldots ,Y^*_n)$ is
denoted by $Q_h$.

In this paper, we show that
\[
\|P_h-Q_h\|_1 \to0
\]
under some regularity conditions if
\[
{n \over k} \to0.
\]
This result can be applied to the asymptotic study of Markov
experiments $(P_{h,\theta}\dvtx  \theta\in\Theta)$ where $\Theta$ is a
finite or infinite-dimensional parameter set. Suppose that for this
family of Markov chains our assumptions apply uniformly for $\theta\in
\Theta$. Then one gets that $\sup_{\theta\in\Theta}\|P_{h,\theta
}-Q_{h,\theta}\|_1 \to0$ where $Q_{h,\theta}$ is the distribution of
the discretized limiting diffusion. This implies that the Markov
experiment $(P_{h,\theta}\dvtx  \theta\in\Theta)$ and the diffusion
experiment $(Q_{h,\theta}\dvtx  \theta\in\Theta)$ are asymptotically
equivalent in the sense of Le Cam's statistical theory of asymptotic
equivalence of experiments. Asymptotic equivalence of nonparametric
experiments has been discussed in a series of papers starting with
\cite{bb2} and \cite{bb14}. Work of statistical
experiments that converge to diffusions include \cite{bb13,bb10,bb3,bb16,bb6,bb7}.
Recently, Reiss \cite{bb15} provided asymptotic equivalence of a stochastic volatility model
with microstructure noise to a Gaussian shift experiment and a
regression model whereas Buchmann and M\"{u}ller \cite{bb4} considered the
relation between GARCH and COGARCH in the framework of statistical
equivalence. Our result justifies approximating diffusion models for
high frequency financial processes that are observed on a coarser grid.
We also outline that the Markov experiment and its diffusion
approximation differ in first order if $n/k$ does not converge to zero.
Then skewness properties of the Markov chain do not vanish in first
order. For a related paper see \cite{bb8}.
They consider estimation of the intensity of a discretely observed compound
Poisson process with symmetric Bernoulli jumps. For this model, they discuss
limit experiments under different assumptions on the limit of the
difference between neighbored time points.

We only discuss Markov chains with continuous state space. The
distribution of Markov chains with discrete state space cannot be
approximated by the distribution of continuous diffusions. For
asymptotic equivalence of the experiments $(P_{h,\theta}\dvtx  \theta\in
\Theta)$ and $(Q_{h,\theta}\dvtx  \theta\in\Theta),$ one has to show
that there exist Markov kernels $K_n$ and $L_n$ with $\sup_{\theta\in
\Theta}\|K_n P_{h,\theta}-Q_{h,\theta}\|_1 \to0$ and $\sup_{\theta
\in\Theta}\| P_{h,\theta}-L_n Q_{h,\theta}\|_1 \to0$. We expect
that such results could be shown by using expansions for transition
densities of Markov random
walks. The approach of this paper is based on expansions developed in
\cite{bb12}. The latter paper only considers Markov
chains with continuous state space. To treat Markov random
walks, their approach has to be carried over to the case of discrete
state spaces.

\section{The main result}\label{sec:Main}

We consider a Markov chain $X_{l,h}$ in $\mathbb{R}$ that runs on very
fine time grid and has
the following form
%
\begin{equation}\label{eq:001}
X_{l+1,h}=X_{l,h}+m ( X_{l,h} ) h+\sqrt{h}
\xi_{l+1,h}, \qquad X_{0,h}=x_0\in\mathbb{R}, \qquad l=0,\ldots ,nk-1.
\end{equation}
The innovation sequence $ ( \xi_{l,h} )_{l=1,...,nk}$ is
assumed to satisfy the Markov assumption: the conditional
distribution of $\xi_{l+1,h}$ given the past
$X_{l,h}=x_{l},\ldots ,X_{0,h}=x_{0}$ depends only on the last value
$X_{l,h}=x_{l}$ and has a conditional density $q (
x_{l},\cdot )$. The conditional variance
corresponding to this density is denoted by $\sigma^2 (x_{l})$ and the
conditional $%
\nu$th order moment by $\mu_{\nu}(x_{l})$. The transition
densities of $ ( X_{r,h} )$ given $ ( X_{l,h} )$
are denoted by
$p_{h} ( rh-lh,x_l,\cdot )$.

In the following, $C$ denotes a finite strictly positive constant whose
meaning may vary from line to line. We make the following assumptions.
\begin{enumerate}[(A3)]
\item[(A1)] It holds that $\int_{\mathbb{R}}y q (
x,y ) \,\mathrm{d}y=0$ for $x\in\mathbb{R}$.

\item[(A2)] There exist positive constants $\sigma_{\star}$ and
$\sigma^{\star}$ such that the variance $\sigma^2  (
x ) =\int_{\mathbb{R}}y^{2}q ( x,y )\, \mathrm{d}y$
satisfies
\[
\sigma_{\star}\leq\sigma^2 ( x ) \leq\sigma^{\star}
\]
for all $x\in\mathbb{R}$.

\item[(A3)] There exist a positive integer
$S^{\prime}> 1$ and a real nonnegative function $\psi ( y )$, $y\in\mathbb{R}$ satisfying $\sup_{y\in\mathbb{R}}\psi (
y ) <\infty$ and
$%
\int_{\mathbb{R}}\llvert  y\rrvert^{S}\psi ( y )
\,\mathrm{d}y<\infty$ with $S=2 S^{\prime}+4$ such that
\[
\bigl\llvert D_{y}^{\nu}q ( x,y ) \bigr\rrvert \leq\psi ( y
) ,\qquad  x,y\in\mathbb{R}, 0 \leq\nu \leq4.
\]
Moreover, for all $x,y\in R$, $j\geq
1 $
\[
\bigl\llvert D_{x}^{\nu}q^{(j)} ( x,y ) \bigr
\rrvert \leq Cj^{-1/2}\psi \bigl( j^{-1/2}y \bigr) ,\qquad 0 \leq\nu\leq3
\]
for a constant $C<\infty$. Here, $q^{(j)}(x,y)$ denotes the usual
$j$-fold convolution of $q$ for fixed $x$ as a function of $y$:
\[
q^{(j)}(x,y)=\int q^{(j-1)}(x,u)q(x,y-u)\,\mathrm{d}u,
\]
$q^{(1)}(x,y)=q(x,y)$.
\end{enumerate}
Note that the last
condition is very weak. It is motivated by
(A2) and the classical local limit theorem.
\begin{enumerate}[(A5)]
\item[(A4)] The functions $m ( x ) $ and
$\sigma ( x ) $ and their derivatives up to the order six are
continuous and bounded.
Furthermore, $%
D_{x}^{6 }\sigma ( x ) $ is H\"older continuous of order $0
< \alpha< 1$.

\item[(A5)] There exists $\varkappa
<\frac{1}{5}$ and a constant $C>0$ such that
\[
C^{-1}k^{-\varkappa}< h k < C.
\]
\end{enumerate}

The Markov chain $X_{l,h}$, see (\ref{eq:001}), is an approximation
to the
following stochastic differential equation in $\mathbb{R}$:%
%
\begin{equation}\label{a1}
\mathrm{d}Y_{s}=m ( Y_{s} ) \,\mathrm{d}s+\sigma ( Y_{s} )
\,\mathrm{d}W_{s},\qquad  Y_{0}=x_0\in\mathbb{R}, \qquad s\in[0,T],
\end{equation}
where $ ( W_{s} )_{s\geq0}$ is the standard Wiener
process. The conditional density of $Y_{t},$ given $Y_{s}=x$ is
denoted by $p ( t-s,x,\cdot ) $. We also write $Y(s)$ for
$Y_s$. The joint distribution of
$Y$ on the equidistant grid $kh,2kh,\ldots ,nkh$ is denoted by $Q_h$.

Our main result is stated in the following theorem.
%
\begin{theorem}
\label{theo1} Assume \textup{(A1)}--\textup{(A5)} and $nk^{-1}
\rightarrow0$. Then it holds that $\Vert P_{h} -
Q_{h}\Vert_{1} \rightarrow0$.
\end{theorem}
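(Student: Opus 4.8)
The plan is to compare the two joint laws by conditioning sequentially on the coarse-grid values and reducing everything to a comparison of one-step transition densities over the coarse interval of length $\Delta := kh$. For the Markov chain, the density of $X_{(i+1)k,h}$ given $X_{ik,h}=x$ is $p_h(\Delta, x, \cdot)$, the $k$-fold iterate of the one-step kernel of \eqref{eq:001}; for the diffusion, it is $p(\Delta, x, \cdot)$. Writing both $P_h$ and $Q_h$ as products of these transition densities along the coarse grid, I would bound $\|P_h-Q_h\|_1$ by a telescoping (hybrid) argument: replace one coarse-grid transition at a time, going from the all-chain product to the all-diffusion product, so that the total $L_1$-distance is controlled by $n$ times a uniform bound on $\int |p_h(\Delta,x,y)-p(\Delta,x,y)|\,dy$, together with a Lipschitz-in-$x$ control needed to propagate the substitution through the remaining factors. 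The key reduction is therefore: it suffices to show that $\sup_x \int_{\Bbb R} |p_h(\Delta,x,y)-p(\Delta,x,y)|\,dy = o(1/n)$.

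To get this per-step bound I would invoke the parametrix-type expansions of Konakov and Mammen (2009) for the transition density $p_h$ of the Markov chain \eqref{eq:001}, which express $p_h(\Delta,x,y)$ as the diffusion density $p(\Delta,x,y)$ plus correction terms organized by powers of $h$ (equivalently $\Delta/k$). The leading correction is the Edgeworth-type skewness term, whose size is governed by the third conditional moment $\mu_3$ and the number of microscopic steps $k$ in the coarse interval; heuristically each such term contributes at order $\Delta/\sqrt{k}$ in a suitable normalized sense, and after integrating over $y$ one expects $\int|p_h(\Delta,x,y)-p(\Delta,x,y)|\,dy = O(k^{-1/2})$ uniformly in $x$, with the smoothness assumptions (A3)--(A4) supplying enough derivatives to make the remainder terms integrable and the moment assumption in (A3) controlling the tails. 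Multiplying by the $n$ coarse steps gives $\|P_h-Q_h\|_1 = O(n k^{-1/2})$. (The sharper statement with the condition $nk^{-1}\to 0$ in the theorem, as opposed to $nk^{-2}\to0$ mentioned in the abstract for Euler schemes, suggests the relevant per-step contribution to the $L_1$ distance is actually $O(k^{-1})$ after the odd skewness term integrates out against the symmetric leading kernel, so I would aim to exploit such cancellation to upgrade $O(k^{-1/2})$ to $O(k^{-1})$.)

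The two technical obstacles I anticipate are the following. First, the telescoping step is only clean if the substitution of one transition density can be pushed through the product of subsequent diffusion transitions without blowing up; this requires uniform-in-$x$ Gaussian-type upper bounds and Lipschitz continuity of $p_h(\Delta,x,\cdot)$ and $p(\Delta,x,\cdot)$ in the starting point $x$, so that the accumulated initial-condition errors stay controlled — this is where (A2) (uniform ellipticity), (A4) (smooth bounded coefficients), and the smoothing by repeated convolution come in. Second, and this is the main obstacle, is obtaining the per-step bound uniformly in $x$ with the correct power of $k$: the parametrix expansion produces a series whose remainder must be shown to be negligible after summing over the $n$ coarse blocks, and the bookkeeping of how the microscopic step size $h$, the block length $k$, and the constraint (A5) ($C^{-1}k^{-\varkappa} < hk < C$ with $\varkappa<1/5$) interact is delicate. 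In particular (A5) is what guarantees that $\Delta=hk$ stays bounded while $h\to 0$, and the exponent restriction $\varkappa<1/5$ is presumably tuned precisely to kill the higher-order Edgeworth remainders; verifying that these remainders are indeed $o(n^{-1})$ after integration, rather than merely $o(1)$ per step, is the crux of the argument.
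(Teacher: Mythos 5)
There is a genuine gap, and it sits exactly where you located the crux. Your reduction ``it suffices to show $\sup_x \int |p_h(\Delta,x,y)-p(\Delta,x,y)|\,dy = o(1/n)$'' asks for something that is false under the hypotheses of the theorem. By the Edgeworth expansion (Theorem \ref{theo4}) the leading discrepancy is $h^{1/2}\pi_1(kh,x,y)$, and by Lemma \ref{lemma4} its $L_1$-norm in $y$ is of exact order $k^{-1/2}$ whenever $\mu_3\not\equiv 0$; it cannot be improved to $O(k^{-1})$. The cancellation you hope to exploit --- ``the odd skewness term integrates out against the symmetric leading kernel'' --- only says that $\int h^{1/2}\pi_1(kh,x,y)\,dy=0$, i.e.\ the signed correction has total mass zero; it does nothing for $\int |h^{1/2}\pi_1(kh,x,y)|\,dy$, since an absolute value destroys sign cancellation within a single step. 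So a pure telescoping argument between $P_h$ and $Q_h$ yields only $O(nk^{-1/2})$, which requires $n/\sqrt{k}\to 0$, strictly stronger than the hypothesis $n/k\to 0$. That the per-step contribution really is $k^{-1/2}$ (in root-mean-square) is confirmed by Theorem \ref{theo3}: when $n/k\to c\neq 0$ and $\mu_3\neq 0$ the distance does not vanish, with a limiting Gaussian variance $22c\mu_3^2$ accumulated as $n\cdot k^{-1}$ from squared per-step terms of size $k^{-1/2}$.

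The cancellation that saves the theorem is \emph{across} coarse steps, not within one, and it forces a different structure. The paper interposes the signed measure $Q_h^1$ built from products of $p+h^{1/2}\pi_1$ and splits the problem in two. For $\|P_h-Q_h^1\|_1$ your telescoping works verbatim, because after subtracting the first-order term the per-step $L_1$ error is $O(k^{-1})$ (it is $h\pi_2+O(h^{1+\delta})$, controlled by Lemma \ref{lemma4b}), giving $O(n/k)=o(1)$; this is Lemma \ref{lemma1}. For $\|Q_h^1-Q_h\|_1$ one cannot telescope at all: one writes it as $E_{Q_h}\bigl|1-\prod_{i=1}^n(1+\Delta_i)\bigr|$ with $\Delta_i=h^{1/2}\pi_1/p$ evaluated along the path, observes that $E_{Q_h}[\Delta_i\mid \mathcal{F}_{i-1}]=0$, so the $\Delta_i$ form a martingale difference sequence and $\sum_i\Delta_i=O_p(\sqrt{n/k})$ rather than $O(nk^{-1/2})$, and concludes that the product of likelihood-ratio factors tends to $1$ (Lemmas \ref{lemma6a}--\ref{lemma9}). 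Your proposal is missing this second mechanism entirely, and without it the argument cannot close under the stated rate condition.
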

\begin{remark}
Theorem~\ref{theo1} can be generalized to higher
dimensions and to the nonhomogenous case. We only treat the univariate
homogenous case for simplicity. In our proof, we make use of the
representation (\ref{a2}) from \cite{bb5}
that is only available for the univariate case. For multivariate
reducible diffusions, one can apply the Hermite expansion given in \cite{bb1}.
\end{remark}
\begin{remark}
 The assumptions of Theorem~\ref{theo1} allow to
apply second order expansions for the transition densities of Markov
chains that have been developed in \cite{bb12}. In the
proof of Theorem~\ref{theo1}, we make only use of first order
expansions. For this reason, the assumptions could be weakened. For
example, we expect that one needs only four derivatives in (A4) instead
of six. We do not pursue this here because we will need the second
order expansions for getting the results in the following theorem.
\end{remark}
%
\begin{theorem}
\label{theo2} Assume \textup{(A1)}--\textup{(A5)}, $nh^{1+\delta}
\rightarrow0$ and $nk^{-2}
\rightarrow0$, where $\delta> 0$ is chosen such that the statement of
Theorem~\ref{theo4} holds for this choice. Suppose that the third
conditional moment $\mu_3(x)$ of innovations of the Markov chain
fulfills $\mu_3(x) \equiv0$. Then it holds that
$\Vert P_{h} -
Q_{h}\Vert_{1} \rightarrow0$.
\end{theorem}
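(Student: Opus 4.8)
The plan is to re-run the telescoping argument behind Theorem \ref{theo1}, but to push the expansion of the $k$-step transition densities to second order and to use the hypothesis $\mu_3\equiv 0$ to annihilate the correction that, in Theorem \ref{theo1}, was responsible for the condition $nk^{-1}\to 0$. First I would write the joint densities of $P_h$ and $Q_h$ as products of $k$-step kernels, $\prod_{i=1}^n p_h(kh,y_{i-1},y_i)$ and $\prod_{i=1}^n p(kh,y_{i-1},y_i)$ with $y_0=x_0$, and split their difference by the standard one-block-at-a-time hybrid telescoping. Integrating out the diffusion factors lying to the right of the active block (each is a probability density and integrates to one) and bounding the Markov factors to its left by the marginal law of $X_{(i-1)k}$, this reduces the estimate of $\|P_h-Q_h\|_1$ to a sum over the $n$ blocks of the contributions of the signed kernels $e_i(x,y)=p_h(kh,x,y)-p(kh,x,y)$, each averaged against the smooth starting marginal and propagated forward through the remaining smooth diffusion steps.

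For a single block I would insert the second-order Edgeworth/parametrix expansion of Konakov and Mammen (2009),
\[
p_h(kh,x,y)=p(kh,x,y)+\pi_1(kh,x,y)+\pi_2(kh,x,y)+R(kh,x,y),
\]
in which $\pi_1$ is the first-order correction, proportional to the third conditional moment $\mu_3$, $\pi_2$ is the second-order correction assembled from $\mu_4$ and the derivatives of $m$ and $\sigma$, and $R$ is a remainder. The decisive point is that $\mu_3\equiv 0$ forces $\pi_1\equiv 0$. This is exactly the term controlling Theorem \ref{theo1}: as a Gaussian density times a third Hermite polynomial (equivalently a third spatial derivative of $p(kh,x,\cdot)$) its bare per-block $L_1$ size is of order $k^{-1/2}$, yet its vanishing low-order moments produce, after averaging against the macroscopic starting marginal and propagation through the smooth semigroup, one further order of cancellation, so that its accumulated contribution is $O(nk^{-1})$ and governs the rate in Theorem \ref{theo1}. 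Once $\pi_1$ is gone the leading term is $\pi_2$, which is even in the Edgeworth sense (a combination of fourth and sixth Hermite factors) and therefore carries one additional vanishing moment; the same cancellation mechanism then gains it a further factor relative to $\pi_1$, lowering its accumulated contribution to $O(nk^{-2})$, which vanishes under the hypothesis $nk^{-2}\to 0$. The uniform bounds on $p(kh,x,\cdot)$ and its derivatives needed to run the integrations by parts that extract this cancellation are supplied by (A2) and (A4), while (A5) keeps $kh$ in the admissible range.

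Finally I would dispose of the remainder $R$ by invoking Theorem \ref{theo4}, whose hypotheses hold for the $\delta>0$ fixed in the statement; summing its per-block bound over the $n$ blocks and using $nh^{1+\delta}\to 0$ renders the total remainder negligible. I expect the hard part to be the middle step: showing that $\pi_2$, whose naive per-block $L_1$ norm is only of order $k^{-1}$, actually contributes to the global distance at the improved order $nk^{-2}$. This demands a careful accounting of the cancellation generated by the vanishing moments of $\pi_2$ when it is composed with the subsequent diffusion kernels and integrated against the smooth marginal of $X_{(i-1)k}$ --- in effect justifying the gain of a full extra factor $k^{-1}$ over the crude bound --- and doing so uniformly in the starting point $x$, which is precisely where the smoothness of $m$ and $\sigma$ in (A4) and the moment and tail control in (A3) are needed.
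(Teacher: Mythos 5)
Your reading of the role of $\mu_3\equiv 0$ is right (it kills $\pi_1$), and your treatment of the remainder via Theorem \ref{theo4} and $nh^{1+\delta}\to 0$ matches the paper's bound (\ref{2}). The gap is in your middle step. Once you run the one-block-at-a-time telescoping and integrate out the factors to the right of the active block, the $j$-th hybrid term has $L_1$ norm exactly $E\bigl[\int |e_j(X_{(j-1)k},z)|\,dz\bigr]$: the absolute value sits on the block error itself, so the subsequent diffusion kernels contribute a factor $1$ and no smoothing. At that point the vanishing moments of $\pi_2$ are useless, because $\int |h\pi_2(kh,x,z)|\,dz$ is genuinely of order $k^{-1}$ (Lemma \ref{lemma4b} gives $|h\pi_2|\leq Ck^{-1}p(kh,x,\cdot)(1+|z-x|/\sqrt{kh})^{7}$, and $\pi_2$ really is that large pointwise relative to $p$). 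The telescoped sum is therefore of order $n/k$, which does \emph{not} vanish under $nk^{-2}\to 0$ alone (take $n=k^{3/2}$). No within-block moment cancellation ``propagated through the semigroup'' can rescue this: the triangle inequality has already been applied, and each summand is nonnegative and of size $k^{-1}$.

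The cancellation that actually yields the improvement from $n/k$ to the required $\sqrt{n}/k$ is stochastic and \emph{across} blocks, and it forces you to keep the product structure rather than telescope. The paper splits $\|P_h-Q_h\|_1\leq \|P_h-Q_h^2\|_1+\|Q_h^2-Q_h\|_1$, where $Q_h^2$ is built from $p+h^{1/2}\pi_1+h\pi_2$. Telescoping is used only for the first piece, where the per-block error is the genuine remainder $O(h^{1+\delta})$ of Theorem \ref{theo4}, giving (\ref{2}). For the second piece one writes $dQ_h^2/dQ_h=\prod_{i=1}^n(1+\Delta_i^{(2)})$ with $\Delta_i^{(2)}=\delta_2(Y((i-1)kh),Y(ikh))$; since $\int\pi_2(kh,x,y)\,dy=0$ (all terms of $\pi_2$ are spatial derivatives integrated against $p$), the $\Delta_i^{(2)}$ are martingale differences, so $\sum_i\Delta_i^{(2)}=O_p(\sqrt{n}\,k^{-1})=O_p(\sqrt{nk^{-2}})\to 0$, while $\sup_i|\Delta_i^{(2)}|=O_p(k^{-1}(\log n)^{7/2})\to 0$; Lemma \ref{lemma9} then forces $\prod_i(1+\Delta_i^{(2)})\to 1$ and hence $\|Q_h^2-Q_h\|_1\to 0$. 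This martingale square-root gain is exactly where $nk^{-2}\to 0$ enters, and it is the ingredient your proposal is missing.
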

\begin{remark}
 This result can be applied to Euler approximations
of diffusions and to Markov chains with symmetric innovations.
For Euler schemes that approximate the joint density of a diffusion at
points $\Delta, 2 \Delta, \ldots , n\Delta$ it means that one has to
generate Euler approximations of the diffusions at points $\Delta
k^{-1}, 2 \Delta k^{-1}, \ldots , n\Delta$ where $k \rightarrow\infty$
is chosen such that $nk^{-2}
\rightarrow0$ and $n(\Delta/k)^{1 + \delta}
\rightarrow0$. The joint distribution of the Euler values at the
points $\Delta, 2 \Delta, \ldots , n\Delta$ is then the approximation of
the joint distribution of the diffusion at these points. Under the
regularity assumptions of Theorem~\ref{theo2}, the Euler approximation
is consistent.
A more detailed discussion of the necessity of the above assumptions on
$k$ will be given elsewhere.
\end{remark}

We now show that our assumption on the growth of $k$ in Theorem~\ref
{theo1} is sharp. For this purpose, we consider a simple model of
Markov chains that converge to a Gaussian process and we show that for
this case $\Vert P_{h} -
Q_{h}\Vert_{1}$ does not converge to zero if the condition on the growth
of $k$ in Theorem~\ref{theo1} is not met.
%
\begin{theorem}
\label{theo3} Assume \textup{(A1)}--\textup{(A5)} for Markov chains with $m(x) \equiv1$
and innovation density $q(x, \cdot) = q(\cdot)$ not depending on $x$.
We assume that $nk^{-1}
\rightarrow c$ for a constant $c \not= 0$. Furthermore, suppose, that
$\mu_3(x)=\mu_3 \not= 0$ and that $kh \to0$. Then $\Vert P_{h} -
Q_{h}\Vert_{1}$ does not converge to zero.
\end{theorem}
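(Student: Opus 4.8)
The plan is to exploit the fact that for this model the chain is an ordinary random walk with drift, so that both $P_h$ and $Q_h$ factor \emph{exactly} into products of $n$ independent increment laws; the non-convergence then follows from a single-increment Hellinger computation combined with the product structure. Since $m\equiv 1$ and $q(x,\cdot)=q(\cdot)$, we have $X_{ik,h}-X_{(i-1)k,h}=kh+\sqrt{h}\sum_{j=(i-1)k+1}^{ik}\xi_{j,h}$, and these increments are i.i.d.\ across $i$; likewise $Y_{ikh}-Y_{(i-1)kh}=kh+\sigma\,(W_{ikh}-W_{(i-1)kh})$ are i.i.d.\ $N(kh,\sigma^2 kh)$. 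Applying the measure-preserving linear bijection $(x_1,\dots,x_n)\mapsto(x_1-x_0,x_2-x_1,\dots,x_n-x_{n-1})$, which has unit Jacobian and hence preserves the $L_1$-norm, I would reduce to $\|P_h-Q_h\|_1=\|\mu^{\otimes n}-\nu^{\otimes n}\|_1$, where $\mu$ is the common law of a single chain increment and $\nu=N(kh,\sigma^2 kh)$.

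Next I would pass to the Hellinger affinity. Since an affine change of variables leaves the Hellinger distance invariant, $H^2(\mu,\nu)$ equals the squared Hellinger distance between the law of the standardized sum $W_k:=(\sigma\sqrt{k})^{-1}\sum_{j=1}^{k}\xi_{j,h}$ and $N(0,1)$, so the deterministic drift $kh$ drops out entirely. Here (A2) and (A3) supply the moment and smoothness conditions needed for a valid Edgeworth expansion of the density of $W_k$, namely $\phi(z)\bigl(1+\tfrac{\mu_3}{6\sigma^3\sqrt{k}}H_3(z)+R_k(z)\bigr)$ with $H_3(z)=z^3-3z$ and remainder $R_k$ of order $k^{-1}$. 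Expanding $(\sqrt{1+u}-1)^2$ to second order then yields $H^2(\mu,\nu)=\tfrac14\bigl(\tfrac{\mu_3}{6\sigma^3\sqrt{k}}\bigr)^2 E_\phi[H_3^2]+o(k^{-1})=\tfrac{\mu_3^2}{24\sigma^6}\,k^{-1}+o(k^{-1})$, using $E_\phi[H_3^2]=3!=6$. Writing $c_0:=\mu_3^2/(24\sigma^6)$, the assumption $\mu_3\neq 0$ is exactly what makes $c_0>0$.

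I would then conclude via the product rule for the affinity and LeCam's lower bound. With $\rho(\mu,\nu)=\int\sqrt{d\mu\,d\nu}=1-\tfrac12 H^2(\mu,\nu)$, the product structure gives $\rho(\mu^{\otimes n},\nu^{\otimes n})=\rho(\mu,\nu)^n$, while $\int\sqrt{pq}\ge\int\min(p,q)$ yields $\|P_h-Q_h\|_1\ge 2\bigl(1-\rho(\mu,\nu)^n\bigr)$. Since $\rho(\mu,\nu)=1-\tfrac{c_0}{2}k^{-1}+o(k^{-1})$, we obtain $\rho(\mu,\nu)^n=\exp\bigl(-\tfrac{c_0}{2}\,nk^{-1}+o(nk^{-1})\bigr)$, and the hypothesis $nk^{-1}\to c\neq 0$ (so $c>0$) forces $\rho(\mu,\nu)^n\to e^{-c_0 c/2}<1$. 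Hence $\liminf\|P_h-Q_h\|_1\ge 2\bigl(1-e^{-c_0 c/2}\bigr)>0$, which is the assertion: the skewness of the chain produces a first-order discrepancy that the diffusion cannot match, and it does not wash out when $n/k$ stays bounded away from zero.

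The main obstacle is the rigorous control of the Edgeworth remainder inside the Hellinger integral: one must verify that the $o(k^{-1})$ error in the density is uniform enough, including in the tails where $\phi$ is small, that it contributes only $o(k^{-1})$ to $H^2$ and does not swamp the leading skewness term. This is precisely where (A3) does the work, through the decay bound $|D_y^\nu q|\le\psi$ with $|y|^S\psi$ integrable and the convolution estimate on $q^{(j)}$, which guarantee existence of the densities, positivity of $f_k$ on the relevant range, and a controlled local limit expansion; the assumption $kh\to 0$ keeps the increments in the regime where this expansion applies cleanly. Once the remainder is bounded, the remaining product and LeCam inequalities are elementary.
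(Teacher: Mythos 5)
Your proposal is correct in outline, and it takes a genuinely different route from the paper. The paper argues by contradiction: if $\|P_h-Q_h\|_1\to 0$ then the log-likelihood $\sum_i \log(1+\Delta_i+\Delta_i^{(2)})$ must tend to zero in $Q_h$-probability, and this is refuted by showing --- via the martingale central limit theorem of Hall and Heyde applied to the $\Delta_i$, together with an explicit computation of $\pi_1$ for the Gaussian transition density --- that $\sum_i\Delta_i$ converges to a nondegenerate normal law; the whole expansion machinery ($\pi_1$, $\delta_1$, $\delta_2$, the representation (\ref{a2})) is reused. You instead exploit the fact that in this model both $P_h$ and $Q_h$ are exact $n$-fold products of i.i.d.\ increment laws, reduce by an affine bijection to the standardized sum $W_k$ versus $N(0,1)$, and obtain the quantitative lower bound $\liminf\|P_h-Q_h\|_1\ge 2(1-e^{-c_0c/2})$ from multiplicativity of the Hellinger affinity, the bound $\int\sqrt{pq}\ge\int\min(p,q)$, and the classical second-order Edgeworth expansion for densities of i.i.d.\ sums. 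Your route is more elementary and self-contained for this counterexample: it needs neither the martingale CLT nor the diffusion representation, it yields an explicit positive lower bound rather than mere non-convergence, and it does not actually use $kh\to 0$, since the law of $W_k$ depends only on $k$. What the paper's route buys is uniformity with the framework of Theorems \ref{theo1} and \ref{theo2}, making visible that it is precisely the first-order skewness correction $\pi_1$ that fails to vanish when $n/k\not\to 0$. The one step you must still write out is the weighted uniform control of the Edgeworth remainder inside the Hellinger integral, exactly as you flag; this is standard here because Cram\'er's condition holds automatically for an integrable bounded density $q$, and $S=2S'+4\ge 8$ finite moments from (A3) make the tail contribution to $H^2$ of order $O(k^{-S/6})=o(k^{-1})$ after truncating at $|z|\le k^{1/6}$, so the leading term $\mu_3^2/(24\sigma^6 k)$ indeed dominates; your constant is consistent with the variance computation in the paper's proof up to the normalizing factors there.
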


\section{Proofs}\label{sec:proofmain}

The proof of Theorem~\ref{theo1} will be divided into several lemmas.
For the
proof, we will make use of the results in \cite{bb12}
where Edgeworth type expansions of $p_{h}$ were given for
nonhomogenous Markov chains in $\mathbb{R}^d$ for $d \geq1$. We now
restate their main result for
one-dimensional homogenous Markov chains. To formulate their result, we
need some
additional notation.

We will use the following differential operators $L$ and
$\widetilde{L}$:
%
%
\begin{eqnarray}\label{eq:001a}
Lf(t,x,y)&=&\frac{1}{2}\sigma^2 (x)\frac{\partial^{2}f(t,x,y)}{(\partial x)^2}+m(x)
\frac{%
\partial f(t,x,y)}{\partial x},\nonumber
\\[-8pt]\\[-8pt]
\widetilde{L}f(t,x,y)&=&\frac{1}{2}\sigma^2 (y)\frac{%
\partial^{2}f(t,x,y)}{(\partial x)^2}%
+m(y)\frac{\partial f(t,x,y)}{\partial
x}.\nonumber
\end{eqnarray}
We also need the following convolution type binary operation $\otimes$:
\[
f\otimes g ( t,x,y ) =\int_{0}^{t}\mathrm{d}u\int
_{R}f ( u,x,z ) g ( t-u,z,y ) \,\mathrm{d}z.
\]
We now introduce the following
differential operators
\begin{eqnarray*}
\mathcal{F}_{1}[f](t,x,y)&=&\frac{\mu_{3 }(x)}{6}D_{x}^{3 }f(t,x,y),
\\
\mathcal{F}_{2}[f](t,x,y)&=&\frac{\mu_{4 }(x)-3\sigma^4(x)}{24}D_{x}^{4}f(t,x,y).
\end{eqnarray*}
The Gaussian transition densities
$\widetilde{%
p}(t,x,y)$ are defined as
\begin{eqnarray*}
\widetilde{p} ( t,x,y ) &=& ( 2\uppi )^{-1/2} \sigma ( y )^{-1}
t^{-1/2} \exp \biggl( -\frac
{1}{2t} \bigl( y-x-tm ( y )
\bigr)^{2}\sigma ( y )^{-2} \biggr) .
\end{eqnarray*}
We are now in the position to state the Edgeworth type expansion for
Markov chain transition densities from \cite{bb12}.
%
\begin{theorem}[(\cite{bb12})]\label{theo4}
Assume \textup{(A1)}--\textup{(A5)}.
Then there exists a
constant %
$\delta>0$ such that the following expansion holds:
\begin{eqnarray*}
&&\sup_{x,y\in R} (kh)^{1/2} \biggl( 1+\biggl\llvert
\frac{y-x}{\sqrt{kh}}%
\biggr\rrvert^{S^{\prime}} \biggr) \\
&&\quad {}\times \bigl|
p_{h}(kh,x,y)-p(kh,x,y)
-h^{1/2}\pi_{1}(kh,x,y)-h\pi_{2}(kh,x,y) \bigr| =\mathrm{O}
\bigl(h^{1+\delta}\bigr),
\end{eqnarray*}
where S$^\prime$ is defined in Assumption \textup{(A3)} and
where
\begin{eqnarray*}
\pi_{1}(t-s,x,y)&=&\bigl(p\otimes\mathcal{F}_{1}[p]\bigr)
(t-s,x,y),
\\
\pi_{2}(t-s,x,y)&=&\bigl(p\otimes\mathcal{F}_{2}[p]\bigr)
(t-s,x,y)+p\otimes \mathcal{F}_{1}\bigl[p\otimes\mathcal{F}_{1}[p]
\bigr](t-s,x,y)
\\
&&{} +\tfrac{1}{2}p\otimes\bigl(L_{\star
}^{2}-L^{2}
\bigr)p(t-s,x,y).
\end{eqnarray*}
Here the operator L$_{\star}$ is
defined as $\widetilde{L}$, but with the coefficients
``frozen'' at the point $x$, that is,
\[
{L}^2_{\star}f(t,x,y)=\frac{1}{4}\sigma^4
(x)\frac{%
\partial^{4}f(t,x,y)}{(\partial x)^4}+ \sigma^2 (x)m(x)\frac{%
\partial^{3}f(t,x,y)}{(\partial x)^3}
+m(x)^2\frac{\partial^2 f(t,x,y)}{(\partial
x)^2}.
\]
\end{theorem}

We will apply this theorem for transition densities over the interval
$(ikh, (i+1) kh]$. The expansion of the theorem holds uniformly over $0
\leq i \leq n-1$.

We denote now the signed measure on $\mathbb{R}^{n}$ defined by the
products of $p + h^{1/2}\pi_{1}$ as $Q_{h}^{1}$ and the signed measure
defined by the products of $p + h^{1/2}\pi_{1} + h\pi_{2}$ as $Q_{h}^{2}$.
\begin{pf*}{Proof of Theorem~\ref{theo1}}
Theorem~\ref{theo1} immediately follows from the following two
lemmas.
\end{pf*}

In all lemmas of this section, we make the assumptions of
Theorem~\ref{theo1}.
%
\begin{lemma}\label{lemma3}
It holds that:
\begin{eqnarray*}
\bigl\Vert Q_{h}^{1} - Q_{h}\bigr\Vert_{1} =
\mathrm{o}(1) \qquad \mbox{for } n \to \infty.
\end{eqnarray*}
\end{lemma}
%
\begin{lemma}\label{lemma1}
It holds that:
\begin{eqnarray*}
\bigl\Vert P_{h} - Q_{h}^{1}\bigr\Vert_{1} =
\mathrm{o}(1) \qquad \mbox{for } n \to\infty.
\end{eqnarray*}
\end{lemma}

The hard part of these two lemmas is the proof of Lemma~\ref{lemma3}. For
the proof of the two lemmas, we will use a series of lemmas that are
stated and proved now. We will come back to the proofs of Lemmas \ref{lemma3} and \ref{lemma1} afterwards.

In our proofs, we make use of the following representation of
transition densities.
For the transition density $p ( t-s,x,\xi ) $
of the
diffusion (\ref{a1}), the following formula holds, see formula (3.2)
in \cite{bb5}
%
\begin{eqnarray}\label{a2}
p ( t-s,x,y ) &=&\widehat{p} ( t-s,x,y )\nonumber
\\[-8pt]\\[-8pt]
&& {}\times E\exp \biggl[ (t-s)\int_{0}^{1}g
\bigl[z_{\delta} \bigl( S(x),S(y ) \bigr)+%
\sqrt{(t-s)}B_{\delta}
\bigr]\,\mathrm{d}\delta \biggr] ,
\nonumber
\end{eqnarray}
where for $0\leq\delta\leq1$
$B_{\delta}$ is a Brownian bridge. Furthermore, for $u \geq0$ we put
$g(u)=-\frac{1}{2} ( C^{2}(u)+C^{\prime}(u) ) $ and
$z_{\delta
}(x,y)=(1-\delta)x+\delta y$ with
%
\begin{eqnarray}\label{a3}
\widehat{p} ( t-s,x,y ) &=&\frac{1}{\sqrt{2\uppi
(t-s)}\sigma(y)}\exp%
\biggl[ -
\frac{(S(y)-S(x))^{2}}{2(t-s)}+H(y)-H(x) \biggr] ,
\\
S(x)&=&\int_{0}^{x}\frac{\mathrm{d}u}{\sigma(u)},
\nonumber
\\[-8pt]\\[-8pt]
H(x)&=&\int_{0}^{S(x)}C(u)\,\mathrm{d}u \qquad \mbox{with } C(u)=
\frac{m(u)}{\sigma(u)}-\frac{1}{2}\sigma^{\prime}(u)\nonumber
\end{eqnarray}
for $ x,y, s,t \in\mathbb{R}$.

Note that under
our assumptions $g$ is bounded, $\llvert  g ( x ) \rrvert
\leq M,$ and, hence, for $t-s\leq kh$%
%
\begin{equation}\label{a4}
E\exp \biggl[ (t-s)\int_{0}^{1}g
\bigl[z_{\delta} \bigl( S(x),S(\xi ) \bigr)+\sqrt{%
(t-s)}B_{\delta}\bigr]\,\mathrm{d}\delta \biggr] \leq\exp[Mkh]\leq C^*
\end{equation}
for some constant $C^* > 0$ because of (A5).
For
the proof of Lemma~\ref{lemma3} we make use of the following lemmas.
These lemmas make use of some further technical lemmas, given in Section~\ref{sec4} that bound $\delta_1(x,y) = \sqrt{h} {\pi_{1} (kh, x ,
y)}/{p (kh,x,y)}$, $\delta_{2}(x,y)=h{\pi_{2}(kh,x,y)}/{p(kh,x,y)}$
and partial derivatives of the transition densities.
%
\begin{lemma}\label{lemma6a}
Put $\Delta_{i} = \delta_1(Y((i-1)kh), Y(ikh))$. Then we have for all
$p \geq1$ that under~$Q_{h}$
\[
\sup_{1 \leq i \leq n} E_{Q_{h}} |\Delta_i|^p \leq
C_p k^{-p/2}
\]
for some constants $C_p$ depending on $p$.
\end{lemma}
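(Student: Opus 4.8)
The plan is to combine the pointwise bound of Lemma \ref{lemma4} with uniform Gaussian-type moment estimates for the increments of the limiting diffusion (\ref{a1}). By Lemma \ref{lemma4},
$$|\Delta_i| \le \frac{C}{\sqrt{k}}\left(1 + \frac{|Y(ikh)-Y((i-1)kh)|}{\sqrt{kh}}\right)^3 ,$$
so that $|\Delta_i|^p \le C^p k^{-p/2}(1+T_i)^{3p}$ with $T_i = |Y(ikh)-Y((i-1)kh)|/\sqrt{kh}$. Since $(1+T_i)^{3p}\le 2^{3p}(1+T_i^{3p})$, it suffices to show that for every real $q\ge 0$ one has $E_{Q_h}[T_i^{q}]\le C_q$ uniformly in $1\le i\le n$ and in $h,k$; the factor $k^{-p/2}$ then appears in front and gives the claim.

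Next I would condition on the starting value. Under $Q_h$ the pair $(Y((i-1)kh),Y(ikh))$ is distributed according to the marginal law of $Y((i-1)kh)$ together with the transition density $p(kh,x,\cdot)$, by the Markov property of the diffusion. Hence it is enough to bound, uniformly in $x$, the conditional moments $\int |y-x|^{q}\,p(kh,x,y)\,dy$ and then integrate against the marginal (a probability measure). To control this integral I use the representation (\ref{a2})--(\ref{a3}): the exponential functional is bounded by the constant $C^{*}$ of (\ref{a4}), so $p(kh,x,y)\le C^{*}\widehat{p}(kh,x,y)$, and it remains to estimate moments against the explicit kernel $\widehat{p}$.

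The key computation is the substitution $z=(S(y)-S(x))/\sqrt{kh}$, under which $dy=\sigma(y)\sqrt{kh}\,dz$ and $\widehat{p}(kh,x,y)\,dy=(2\pi)^{-1/2}\exp[-z^{2}/2+H(y)-H(x)]\,dz$. By (A2) and (A4) the function $S'=1/\sigma$ is bounded above and below, so $|y-x|\le C\sqrt{kh}\,|z|$; moreover $C(u)=m(u)/\sigma(u)-\tfrac12\sigma'(u)$ is bounded, whence $|H(y)-H(x)|\le M'|S(y)-S(x)|=M'\sqrt{kh}\,|z|$. Since $kh\le C$ by (A5), the linear term $M'\sqrt{kh}\,|z|$ is dominated by the Gaussian and one is left with a finite integral,
$$\int |y-x|^{q}\,p(kh,x,y)\,dy \le C^{*}C^{q}(kh)^{q/2}\int |z|^{q}(2\pi)^{-1/2}\exp\Big[-\tfrac{z^{2}}{2}+M'\sqrt{C}\,|z|\Big]\,dz = C_q\,(kh)^{q/2},$$
with $C_q$ independent of $x,h,k$. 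Dividing by $(kh)^{q/2}$ and integrating over the marginal of $Y((i-1)kh)$ yields $E_{Q_h}[T_i^{q}]\le C_q$ uniformly, as required.

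The argument is essentially routine once Lemma \ref{lemma4} is at hand; the only step needing care is the uniform-in-$x$ control of the conditional increment moments, where the factor $\exp[H(y)-H(x)]$ grows linearly in $|y-x|$ in the exponent and must be absorbed into the Gaussian tail. This is precisely where the upper bound $kh\le C$ furnished by (A5) is used.
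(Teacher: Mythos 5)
Your proof is correct and follows essentially the same route as the paper, which simply states that the lemma ``directly follows from Lemma \ref{lemma4} and the representation (\ref{a2})'' with the moments bounded by Gaussian moments; you have supplied exactly the intended details (the reduction to increment moments, the domination $p\le C^{*}\widehat p$ via (\ref{a4}), and the substitution $z=(S(y)-S(x))/\sqrt{kh}$ with the $H(y)-H(x)$ term absorbed into the Gaussian tail).
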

\begin{pf*}{Proof of Lemma~\ref{lemma6a}}
This lemma directly follows from Lemma~\ref{lemma4} and the representation
(\ref{a2}). Using these results, the moments of $\Delta_i$ can be
easily bounded by Gaussian moments.
\end{pf*}

Lemma~\ref{lemma6a} implies that for all $\rho> 0$ under
$Q_{h}$
\begin{eqnarray*}
\sup_{1 \le i \le n} |\Delta_{i}| = \mathrm{O}_{p}
\bigl(k^{-1/2} n^{\rho}\bigr).
\end{eqnarray*}
This bound would suffice for our purposes but for completeness we state
the following sharper bound that follows (from our Lemma~\ref{lemma4}
and) from Theorem~1 in \cite{bb9}, where bounds for moments for the modulus of continuity of
diffusions are given.
%
\begin{lemma}\label{lemma6}
We have that under
$Q_{h}$ that
\begin{eqnarray*}
\sup_{1 \le i \le n} |\Delta_{i}| = \mathrm{O}_{p}
\bigl(k^{-1/2} (\log n)^{3/2}\bigr).
\end{eqnarray*}
\end{lemma}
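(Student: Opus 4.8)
The plan is to reduce the claim to a bound on the largest normalized increment of the diffusion $Y$ over the $n$ consecutive intervals of length $kh$, and then to control that maximum by a modulus-of-continuity estimate. This sharpens the crude $O_p(k^{-1/2}n^\rho)$ rate obtained from the per-index moment bound of Lemma \ref{lemma6a}.

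First I would feed the increments of $Y$ directly into Lemma \ref{lemma4}. Since $\Delta_i = \delta_1(Y((i-1)kh), Y(ikh))$, Lemma \ref{lemma4} gives directly
\[
|\Delta_i| \leq \frac{C}{\sqrt{k}}\left(1 + \frac{|Y(ikh)-Y((i-1)kh)|}{\sqrt{kh}}\right)^3,
\]
so that
\[
\sup_{1 \leq i \leq n}|\Delta_i| \leq \frac{C}{\sqrt{k}}\left(1 + M_n\right)^3, \qquad M_n := \sup_{1 \leq i \leq n}\frac{|Y(ikh)-Y((i-1)kh)|}{\sqrt{kh}}.
\]
Hence it suffices to prove $M_n = O_p(\sqrt{\log n})$ under $Q_h$; cubing then yields the announced factor $(\log n)^{3/2}$ together with the prefactor $k^{-1/2}$.

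Second I would dominate $M_n$ pathwise by the modulus of continuity of $Y$ on $[0,nkh]$ at scale $kh$. Writing $\omega_Y(\delta) = \sup\{|Y_t - Y_s| : |t-s| \leq \delta,\ s,t \in [0,nkh]\}$, every block increment is among the increments entering $\omega_Y(kh)$, so $M_n \leq \omega_Y(kh)/\sqrt{kh}$ and hence $E_{Q_h}[M_n^p] \leq (kh)^{-p/2} E[\omega_Y(kh)^p]$ for every $p \geq 1$. By Theorem 1 of Fischer and Nappo (2010), valid because $m$ and $\sigma$ are bounded by (A4),
\[
E[\omega_Y(kh)^p] \leq C_p\,(kh)^{p/2}\bigl(\log(T/(kh)) + e\bigr)^{p/2}
\]
with horizon $T = nkh$, so that the scale ratio is exactly $T/(kh) = n$ and the bracket reduces to $\log n + e$. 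Thus $E_{Q_h}[M_n^p] \leq C_p(\log n + e)^{p/2}$, and Markov's inequality with $p$ large gives $M_n = O_p(\sqrt{\log n})$; combined with the first step this finishes the proof.

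The step that requires care is the second one: one must use the horizon-dependent form of the modulus bound, in which the logarithmic factor is $\log(T/(kh)) = \log n$ (the number of blocks) rather than $\log(1/(kh))$, and one must check that the constant $C_p$ stays uniform in $n$ even though $T = nkh$ grows. A fully self-contained alternative, which sidesteps Fischer and Nappo, is to use the sub-Gaussian tail $Q_h(|Y(ikh)-Y((i-1)kh)| > t\sqrt{kh}) \leq C e^{-ct^2}$ of a bounded-coefficient diffusion increment, itself immediate from the Gaussian bound on $p$ furnished by the representation (\ref{a2})--(\ref{a3}), together with a union bound over the $n$ blocks and the choice $t = M\sqrt{\log n}$, so that $\sum_{i=1}^n C e^{-cM^2\log n} = C\,n^{1-cM^2} \to 0$ for $M$ large. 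This delivers the same rate $M_n = O_p(\sqrt{\log n})$ and makes the origin of the $\log n$ factor transparent.
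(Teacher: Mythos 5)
Your argument is correct and follows essentially the same route the paper takes: the paper proves Lemma \ref{lemma6} by combining the pointwise bound of Lemma \ref{lemma4} with Theorem 1 of Fischer and Nappo (2010) on moments of the modulus of continuity, which is exactly your reduction to $M_n = O_p(\sqrt{\log n})$. Your self-contained alternative via sub-Gaussian increment tails and a union bound is a nice addition, but the main line of reasoning coincides with the paper's.
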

We now state a result on the order of sums of $\Delta_i$'s.
%
\begin{lemma}\label{lemma7}
Under $Q_{h}$ it holds that
\begin{eqnarray*}
\sum_{i=1}^{n} \Delta_{i} =
\mathrm{O}_{p} \biggl( \sqrt{\frac{n}{k}} \biggr).
\end{eqnarray*}
\end{lemma}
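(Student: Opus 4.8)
The plan is to recognize $(\Delta_i)_{1\le i\le n}$ as a martingale difference sequence under $Q_h$, so that in the sum the variances add rather than the absolute values, which is exactly what turns the crude rate $O_p(n/\sqrt k)$ into the claimed $O_p(\sqrt{n/k})$. Let $\mathcal{G}_i=\sigma(Y(0),Y(kh),\ldots,Y(ikh))$ be the natural filtration of the sampled diffusion. Each $\Delta_i=\delta_1(Y((i-1)kh),Y(ikh))$ is $\mathcal{G}_i$-measurable, and since $Y$ is Markov the first step is to compute the conditional mean, using $\delta_1(x,y)=\sqrt h\,\pi_1(kh,x,y)/p(kh,x,y)$:
$$E_{Q_h}[\Delta_i\mid\mathcal{G}_{i-1}]=\int\delta_1(Y((i-1)kh),y)\,p(kh,Y((i-1)kh),y)\,dy=\sqrt h\int\pi_1(kh,Y((i-1)kh),y)\,dy.$$

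Second, I would show this conditional mean is exactly zero. Writing $\pi_1=p\otimes\mathcal{F}_1[p]$ out gives
$$\int\pi_1(kh,x,y)\,dy=\int_0^{kh}du\int p(u,x,\xi)\,\frac{\mu_3(\xi)}{6}\left(\int D_\xi^3 p(kh-u,\xi,y)\,dy\right)d\xi,$$
and the inner integral vanishes because $\int D_\xi^3 p(kh-u,\xi,y)\,dy=D_\xi^3\int p(kh-u,\xi,y)\,dy=D_\xi^3(1)=0$. Hence $(\Delta_i,\mathcal{G}_i)$ is a martingale difference sequence, the cross terms drop out of the expansion of the square, and
$$E_{Q_h}\left[\left(\sum_{i=1}^n\Delta_i\right)^2\right]=\sum_{i=1}^n E_{Q_h}[\Delta_i^2]\le n\sup_{1\le i\le n}E_{Q_h}[\Delta_i^2]\le C_2\,\frac{n}{k},$$
where the last inequality is Lemma \ref{lemma6a} with $p=2$. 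Markov's inequality then yields $\sum_{i=1}^n\Delta_i=O_p(\sqrt{n/k})$.

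The hard part will be the rigorous justification of the vanishing conditional mean, namely the two interchanges underlying it: differentiating under the $y$-integral to pass $D_\xi^3$ through the identity $\int p(kh-u,\xi,y)\,dy=1$, and applying Fubini to the $(u,\xi)$-integration. Both are controlled by the Gaussian-type derivative bounds of Lemma \ref{lemma5}, which dominate $D_\xi^\nu p$ by $p$ times a polynomial in $|y-\xi|/\sqrt{kh-u}$ and thus guarantee integrability with uniformly controlled tails; combined with (A2)--(A4) and the boundedness of $\mu_3$ this legitimizes both steps. It is worth emphasizing that this exact cancellation is essential: the bound $\sum_i|\Delta_i|=O_p(n/\sqrt k)$ available from Lemma \ref{lemma6a} alone is weaker by a factor $\sqrt n$, and it is precisely the mean-zero (martingale) structure of the Edgeworth correction $\pi_1$ that produces the sharper $O_p(\sqrt{n/k})$ rate.
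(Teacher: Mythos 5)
Your proof follows the same route as the paper: the authors likewise observe that $E_{Q_h}[\Delta_i\mid Y_j:\,j\le i-1]=0$ so the cross terms vanish, bound $E_{Q_h}[(\sum_i\Delta_i)^2]=\sum_i E_{Q_h}[\Delta_i^2]=O(n/k)$ via Lemma \ref{lemma6a}, and conclude by Chebyshev. Your explicit verification that $\int\pi_1(kh,x,y)\,dy=0$ (via $D_\xi^3\int p(kh-u,\xi,y)\,dy=0$) correctly fills in the step the paper only asserts.
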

\begin{pf}
We have that $E_{Q_{h}}[\Delta_{i} \Delta_{j}] = 0$ for $i \ne j$
because the definition of $\Delta_{i}$ implies that $E_{Q_{h}}[\Delta_{i}|Y_{j}\dvtx  j \le i-1] = 0$. Thus, it holds
\begin{eqnarray*}
E_{Q_{h}} \Biggl[\Biggl(\sum_{i=1}^{n}
\Delta_{i}\Biggr)^{2}\Biggr] = \sum
_{i=1}^{n} E_{Q_{h}}\bigl[(
\Delta_{i})^2\bigr] = \mathrm{O}\bigl(n k^{-1} \bigr).
\end{eqnarray*}
This follows from Lemma~\ref{lemma6a}.
\end{pf}

Put $A_{n} =  \{ (Y(kh),\ldots ,Y(nkh))\dvtx \sup_{1 \le i \le n} |\Delta_{i}| \le\tau_{n}
\sqrt{\frac{n}{k}}, |\sum_{i=1}^{n} \Delta_{i}| \le\tau_{n}
\sqrt{\frac{n}{k}}  \}$, where $\tau_{n}
\rightarrow\infty$ with $\tau_{n} \sqrt{\frac{n}{k}}
\rightarrow0$.

Then we get from Lemmas \ref{lemma6}--\ref{lemma7} that
%
\begin{eqnarray}
\label{eqa1} Q_{h} (A_{n}) \rightarrow1.
\end{eqnarray}

For the proof of Lemma~\ref{lemma3}, we need the following additional
simple lemma.
%
\begin{lemma}\label{lemma9}
Consider the set $B_{n} = \{ x \in\mathbb{R}^{n}\dvtx  |x_{i}| \le\tau_{n} \sqrt{\frac{n}{k}}$ for $i=1,\ldots ,n; |\sum_{i=1}^n x_{i}| \le
\tau_{n} \sqrt{\frac{n}{k}} \} \subset\mathbb{R}^{n}$, where $\tau_{n}
\rightarrow\infty$ with $\tau_{n} \sqrt{\frac{n}{k}}
\rightarrow0$. Then it holds that
\begin{eqnarray*}
\sup_{x \in B_{n}} \Biggl\llvert 1 - \prod_{i=1}^{n}(1
+ x_{i})\Biggr\rrvert \rightarrow0 \qquad \mbox{for } n \to\infty.
\end{eqnarray*}
\end{lemma}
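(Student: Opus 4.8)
The plan is to pass to logarithms. Writing $\prod_{i=1}^n(1+x_i)=\exp\bigl(\sum_{i=1}^n\log(1+x_i)\bigr)$ and using that $e^t\to 1$ as $t\to 0$, it suffices to prove that $\sum_{i=1}^n\log(1+x_i)\to 0$ uniformly over $x\in B_n$. This is legitimate because the first defining constraint of $B_n$ gives $\sup_{1\le i\le n}|x_i|\le\tau_n\sqrt{n/k}\to 0$, so for $n$ large every factor $1+x_i$ lies in a fixed compact subinterval of $(0,\infty)$, say $[1/2,3/2]$, uniformly over $B_n$; in particular every logarithm is well defined and the product stays bounded away from $0$ and $\infty$.

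Next I would expand each summand. Using $|\log(1+t)-t|\le Ct^2$ for $|t|\le 1/2$ with an absolute constant $C$, I obtain, uniformly over $B_n$,
\[
\Bigl|\sum_{i=1}^n\log(1+x_i)-\sum_{i=1}^n x_i\Bigr|\le C\sum_{i=1}^n x_i^2 .
\]
The linear term is handled directly by the second defining constraint: $\bigl|\sum_{i=1}^n x_i\bigr|\le\tau_n\sqrt{n/k}\to 0$. Hence everything reduces to showing that the quadratic remainder $\sum_{i=1}^n x_i^2$ tends to $0$ uniformly over $B_n$, after which $\sup_{x\in B_n}\bigl|\sum_{i=1}^n\log(1+x_i)\bigr|\to 0$ and exponentiating finishes the proof.

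The main obstacle is precisely this quadratic term, because the two constraints defining $B_n$ bound the supremum and the \emph{signed} sum of the coordinates but not their $\ell^2$-norm directly. The crude bound $\sum_{i=1}^n x_i^2\le n\,\sup_{1\le i\le n}x_i^2$ combined with $\sup_i|x_i|\le\tau_n\sqrt{n/k}$ is where the growth condition on $k$ and the choice of $\tau_n$ genuinely enter, and it is the delicate step. It is worth noting that in the intended application $x_i=\Delta_i$ the quadratic mass is in fact negligible for a different reason: Lemmas \ref{lemma4} and \ref{lemma6a} give $E_{Q_h}\Delta_i^2=O(k^{-1})$, whence $\sum_{i=1}^n\Delta_i^2=O_p(n/k)\to 0$, so I would carry $\sum_i x_i^2$ as the binding quantity and verify its smallness on the relevant high-probability event. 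Once uniform smallness of the coordinates, vanishing of the signed sum, and vanishing of the quadratic mass are all in hand, they combine through the displayed expansion to yield $\sup_{x\in B_n}\bigl|1-\prod_{i=1}^n(1+x_i)\bigr|\to 0$.
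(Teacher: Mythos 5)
Your reduction via $\prod_{i=1}^n(1+x_i)=\exp\bigl(\sum_{i=1}^n\log(1+x_i)\bigr)$ together with $|\log(1+t)-t|\le Ct^2$ is the right first step, and the linear term is indeed killed by the constraint on $|\sum_i x_i|$. The paper gives no proof of Lemma \ref{lemma9} (it is announced as a ``simple lemma''), so there is nothing to compare against line by line; but the gap you flag at the quadratic term is real and, under the stated hypotheses, cannot be closed. The constraints defining $B_n$ control only $\max_i|x_i|$ and the signed sum, and these do not force $\sum_{i=1}^n x_i^2\to0$. In fact the lemma as stated is false: take $k=n^{3/2}$ and $\tau_n=n^{1/8}$, so that $n/k\to0$ and $\tau_n\sqrt{n/k}=n^{-1/8}\to0$, and put $x_i=(-1)^i n^{-1/8}$ for $n$ even. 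Then $x\in B_n$, yet $\prod_{i=1}^n(1+x_i)=(1-n^{-1/4})^{n/2}\to0$, so $\bigl|1-\prod_{i=1}^n(1+x_i)\bigr|\to1$. The crude bound $\sum_i x_i^2\le n\max_i x_i^2\le \tau_n^2 n^2/k$ that you mention only works under the stronger condition $n^2k^{-1}\to0$, which is not assumed in Theorem \ref{theo1}.

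The repair you sketch in your last paragraph is exactly what is needed and is how the lemma (and the event $A_n$) should be restated: add to $B_n$ the constraint $\sum_{i=1}^n x_i^2\le\epsilon_n$ for some $\epsilon_n\to0$, e.g.\ $\epsilon_n=\tau_n\,n/k$. With that extra constraint your expansion closes immediately, and the downstream use is unaffected: Lemma \ref{lemma6a} gives $E_{Q_h}\sum_{i=1}^n\Delta_i^2=O(n/k)=o(1)$, hence $\sum_i\Delta_i^2=o_p(1)$, the augmented event still satisfies $Q_h(A_n)\to1$, and the argument of Lemma \ref{lemma8} goes through verbatim. One small side remark: your parenthetical claim that the product ``stays bounded away from $0$ and $\infty$'' because each factor lies in $[1/2,3/2]$ is not by itself true for a product of $n$ such factors (the counterexample above shows this); only the corrected quadratic control yields it.
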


The lemma implies that
\[
\max_{1 \leq j \leq n} \sup_{x \in B_{nj}} \Biggl\llvert 1 - \prod
_{i=1}^{j}(1 + x_{i})\Biggr\rrvert
\rightarrow0 \qquad \mbox{for } n \to\infty,
\]
where $B_{nj} = \{ x \in\mathbb{R}^{j}\dvtx  |x_{i}| \le\tau_{n} \sqrt {\frac{n}{k}}$ for $i=1,\ldots ,j; |\sum_{i=1}^j x_{i}| \le\tau_{n}
\sqrt{\frac{n}{k}} \} \subset\mathbb{R}^{n}$. This follows by
putting $x_i=0$ for $i=j+1,\ldots ,n$.

The next lemma states that the expansion (\ref{eqa1}) also holds under
the measure $Q_{h}^{1}$.
%
\begin{lemma}\label{lemma8}
It holds that
\begin{eqnarray*}
\bigl|Q_{h}^{1}\bigr| (A_{n}) \rightarrow1.
\end{eqnarray*}
Here, $|Q_{h}^{1}|$ means the total variation measure of $Q_{h}^{1}$.
\end{lemma}
\begin{pf}
By application of Lemma~\ref{lemma9}, we get that
\begin{eqnarray*}
\bigl\vert \bigl|Q_{h}^{1}\bigr| (A_{n}) -
Q_{h} (A_{n}) \bigr\vert & \leq& \int I(A_n)
\bigl\llvert \mathrm{d}Q_{h}^{1} - \mathrm{d}Q_{h} \bigr\rrvert
\\
&=& \int\Biggl\llvert 1- \prod_{i=1}^n (1
+ \Delta_i)\Biggr\rrvert I(A_n) \,\mathrm{d} Q_h
\\
&=& \mathrm{o}(1).
\end{eqnarray*}
This implies the statement of the lemma because of (\ref{eqa1}).
\end{pf}

We now prove Lemma~\ref{lemma3}.
\begin{pf*}{Proof of Lemma~\ref{lemma3}}
We have that
\begin{eqnarray*}
\bigl\Vert Q_{h}^{1} - Q_{h}\bigr\Vert_{1} &=&
E_{Q_{h}} \Biggl[ \Biggl\llvert 1 - \prod_{i=1}^{n}
(1 + \Delta_{i})\Biggr\rrvert \Biggr]
\\
&=& E_{Q_{h}} \Biggl[ \Biggl\llvert 1 - \prod
_{i=1}^{n} (1 + \Delta_{i})\Biggr\rrvert I
(A_{n}) \Biggr] + \mathrm{o}(1),
\end{eqnarray*}
because of (\ref{eqa1}) and Lemma~\ref{lemma8}. Now the lemma
follows from Lemma~\ref{lemma9}.
\end{pf*}

It remains to prove Lemma~\ref{lemma1}.
\begin{pf*}{Proof of Lemma~\ref{lemma1}}
We can write $P_{h} = P_{h,1} \times\cdots\times P_{h,n}$ and
$Q_{h}^{1} = Q^{1}_{h,1} \times\cdots\times Q^{1}_{h,n}$ where
$P_{h,j}$, $Q^{1}_{h,j}$ are suitably defined (signed) Markov
kernels.
By using a telescope argument, we get with constants $C^*, C^{**} > 0$
that for $n$ large enough
\begin{eqnarray*}
\bigl\llVert P_{h}-Q_{h}^{1}\bigr
\rrVert_{1}&=&\int\bigl\llvert p_{h}(kh,x,z_{1})
\times\cdots \times p_{h}(kh,z_{n-1},z_{n})
\\
&& \hphantom{\int\bigl\llvert}{}-  \bigl(p+h^{1/2}\pi_{1}\bigr)
(kh,x,z_{1})\times\cdots\times\bigl(p+h^{1/2}\pi_{1}
\bigr) (kh,z_{n-1},z_{n})\bigr\rrvert \,\mathrm{d}z_{1}\cdots\, \mathrm{d}z_{n}
\\
&\leq& \int\bigl\llvert \bigl(p_{h}-p-h^{1/2}
\pi_{1}\bigr) (kh,x,z_{1})\bigr\rrvert \\
&&\hphantom{\int}{} \times p_{h}(kh,z_{1},z_{2})
\times\cdots\times p_{h}(kh,z_{n-1},z_{n})\,\mathrm{d}z_{1}\cdots\, \mathrm{d}z_{n}
\\
&&{} +\int\bigl|p+h^{1/2}\pi_{1}\bigr|(kh,x,z_{1})\bigl
\llvert \bigl(p_{h}-p-h^{1/2}\pi_{1}\bigr)
(kh,z_{1},z_{2})\bigr\rrvert
\\
&& \hphantom{{} +\int}{}\times p_{h}(kh,z_{2},z_{3})\times\cdots \times
p_{h}(kh,z_{n-1},z_{n})\,\mathrm{d}z_{1}\cdots\, \mathrm{d}z_{n}
\\
&& {}+\cdots +\int\bigl|p+h^{1/2}\pi_{1}\bigr|(kh,x,z_{1})
\times\cdots \times \bigl|p+h^{1/2}\pi_{1}\bigr|(kh,z_{n-2},z_{n-1})
\\
&& \hphantom{{}+\cdots +\int}{}\times \bigl\llvert \bigl(p_{h}-p-h^{1/2}
\pi_{1}\bigr) (kh,z_{n-1},z_{n})\bigr\rrvert
\,\mathrm{d}z_{1}\cdots \,\mathrm{d}z_{n}
\\
&\leq& \frac{C^{*}}{k} \Biggl( 1+ \sum_{j=1}^{n-1}E_{Q_h}
\Biggl[ \prod_{i=1}^{j}\bigl(\llvert
1+%
\Delta_{i}\rrvert \bigr) \Biggr] \Biggr)
\\
&\leq& \frac{C^{*}}{k} \Biggl( 1 + \sum_{j=1}^n
\bigl\| Q_{h,j}^1\bigr\|_1 \Biggr)
\\
&\leq& n \frac{C^{*}}{k} \bigl(1+ \mathrm{o}(1) \bigr)\leq\frac{C^{**}n}{k}%
=\mathrm{o}(1),\qquad n\rightarrow\infty,
\end{eqnarray*}
where
$\| Q_{h,j}^1\|_1 = \int| p + h^{1/2} \pi_1|(kh,x,z_1) \times\cdots
\times | p + h^{1/2} \pi_1|(kh,x,z_j) \,\mathrm{d}z_1\cdot \cdots \cdot dz_j$.
We used that
%
\begin{equation}
\label{lum2}\sup_x \int\bigl\llvert \bigl(p_{h}-p-h^{1/2}
\pi_{1}\bigr) (kh,x,z)\bigr\rrvert \,\mathrm{d}z \leq\frac{C^{*}}{k}
\end{equation}
for some constant $C^{*}> 0$ and that uniformly in $1 \leq j \leq n$,
$\| Q_{h,j}^1\|_1 \leq\| Q_{h,j}\|_1+ \| Q_{h,j}^1- Q_{h,j}\|_1 = 1 +
\| Q_{h,j}^1- Q_{h,j}\|_1 = 1 + \mathrm{o}(1)$. For the last equality, we used
Lemma~\ref{lemma3}.

From Theorem~\ref{theo4}, we get that the left-hand side of the
inequality can be bounded by:
%
\begin{equation}
\label{lum1}\sup_x \int\bigl\llvert h\pi_{2}(kh,x,z)
\bigr\rrvert \,\mathrm{d}z+ \mathrm{O}\bigl(h^{1+\delta} \bigr) \sup_x \int
(kh)^{-1/2} \biggl( 1+\biggl\llvert \frac{z-x}{\sqrt{kh}}%
\biggr
\rrvert^{S^{\prime}} \biggr)^{-1} \,\mathrm{d}z.
\end{equation}

According to Assumption (A5) $hk$ is bounded. Thus, the second term in
(\ref{lum1}) is of order $\mathrm{O}(h^{1+\delta} )=\mathrm{O}((hk)^{1+\delta}
k^{-1-\delta}) = \mathrm{O}(k^{-1-\delta} ) =\mathrm{O}(k^{-1} )$.

For the first term, we have the following bound from Lemma~\ref{lemma4b}:
\[
\mathrm{O}\bigl(k^{-1}\bigr) \sup_x \int p(kh,x,z) \biggl[ 1+
\biggl( \frac{%
\llvert  z-x\rrvert }{\sqrt{kh}} \biggr)^{7} \biggr] \,\mathrm{d}z.
\]
Now, the second factor of this bound is of order $\mathrm{O}(1)$ because of
(\ref{a2}). Thus, the bound is of order
$\mathrm{O}(k^{-1})$. This shows claim (\ref{lum2}) and concludes the proof of
the lemma.
\end{pf*}
\begin{pf*}{Proof of Theorem~\ref{theo2}}
It is enough to prove that
%
\begin{equation}\label{1}
\bigl\llVert Q_{h}-Q_{h}^{2}\bigr
\rrVert_{1}\rightarrow0,\qquad n\rightarrow \infty
\end{equation}

and%
%
\begin{equation}\label{2}
\bigl\llVert P_{h}-Q_{h}^{2}\bigr
\rrVert_{1}\leq Cnh^{1+\delta}.
\end{equation}

Claim (\ref{2}) can be shown with arguments similar to the ones used
in the proof of Lemma~\ref{lemma1}. Instead of the bound \ref{lum1},
one now uses the
expansion of Theorem~\ref{theo4}.

The proof of (\ref{1}) is
close to the proof of Lemma~\ref{lemma3}. With $\Delta^{(2)}_{i}=\delta_{2}(Y((i-1)kh),Y(ikh)),$ we obtain as it was done
before%
%
\begin{eqnarray}
\label{bound2a} \sup_{1\leq i\leq n}E\bigl\llvert \Delta^{(2)}_{i}
\bigr\rrvert^{p}&\leq& C_{p}k^{-p},
\\
\label{bound2b} \sup_{1\leq i\leq n}\bigl\llvert \Delta^{(2)}_{i}
\bigr\rrvert &=&\mathrm{O}_{p}\bigl(k^{-1} (\log n)^{7/2}
\bigr),
\\
\label{bound2c} \sum_{i=1}^{n}
\Delta^{(2)}_{i} &=&\mathrm{O}_{p}\biggl(\frac{\sqrt{n}}{k}
\biggr)
\end{eqnarray}
and the assertion of Theorem~\ref{theo2} follows with the same
arguments as used in the proof of Theorem~\ref{theo1}.
\end{pf*}
\begin{pf*}{Proof of Theorem~\ref{theo3}}
 Without loss of
generality, we assume that $\int x^2 q(x) \,\mathrm{d}x =1$.
Suppose that $\Vert P_{h} -
Q_{h}\Vert_{1}$ does converge to zero. This implies that the loglikelihood
$\log(\mathrm{d}P_h/\mathrm{d}Q_h)$ converges to zero in $Q_h$-probability. Thus, we
have that
\[
\sum_{i=1}^n \log\bigl(1 +
\Delta_i+ \Delta^{(2)}_{i}\bigr) \stackrel
{Q_h} {\rightarrow} 0.
\]
Note that the bounds (\ref{bound2a})--(\ref{bound2c}) remain valid
under the assumptions of Theorem~\ref{theo3}.
We now apply Lemma~\ref{lemma6} and (\ref{bound2b}). With a Taylor
expansion of the logarithm, we get from the last expression that
\[
\sum_{i=1}^n \Delta_i -
{1 \over2} \Delta_i^2 + \Delta^{(2)}_{i}
\stackrel{Q_h} {\rightarrow} 0.
\]
Because of (\ref{bound2c}) this shows that
%
\begin{eqnarray}
\label{var0} \sum_{i=1}^n
\Delta_i - {1 \over2} \Delta_i^2
\stackrel {Q_h} {\rightarrow} 0.
\end{eqnarray}
We will show that under $Q_h$
%
\begin{eqnarray}
\label{var1} \sum_{i=1}^n
\Delta_i \stackrel{d} {\rightarrow} N\bigl(0, \sigma^2
\bigr)
\end{eqnarray}
with $\sigma^2= 22 c \mu_3^2>0$ where $c$ is the limit of $n/k$.
Note that (\ref{var1}) contradicts (\ref{var0}) because these two
limit statements would imply that
\[
{1 \over2} \sum_{i=1}^n
\Delta_i^2 \stackrel{d} {\rightarrow} N\bigl(0,
\sigma^2\bigr).
\]
This is not possible because non negative random variables cannot
converge in distribution to a normal limit with strictly positive
variance. Thus for the statement of the theorem, it remains to prove
(\ref{var1}).

For the proof of (\ref{var1}), we will use a martingale central limit
theorem for the martingale $\sum_{j=1}^i \Delta_j$ with $\sigma
$-field ${\mathcal F}_{h,i}=\sigma(Y(0), Y(kh), \ldots ,Y(ikh))$.
According to Theorem~3.2 and Corollary~3.1 in \cite{bb11}, we
have for (\ref{var1}) to check that
%
\begin{eqnarray}
\label{var2}\sum_{i=1}^n E\bigl[
\Delta_i^2| {\mathcal F}_{h,i-1}\bigr] &\to&
\sigma^2,\qquad  \mbox{in probablity},
\\
\label{var2a}\max_{1 \leq i \leq n} \Delta_i^2 &\to&0, \qquad \mbox{in
probablity},
\\
\label{var2b}E\Bigl[\max_{1 \leq i \leq n} \Delta_i^2 \Bigr]&=& \mathrm{O}(1).
\end{eqnarray}
Claims (\ref{var2a})--(\ref{var2b}) follow directly from Lemmas \ref
{lemma6a}--\ref{lemma6}. Here, for the proof of (\ref{var2b}) one can
use the simple bound $\max_{1 \leq i \leq n} \Delta_i^2 \leq\sum_{i=1}^n \Delta_i^2$. Thus for the statement of the theorem it only
remains to prove (\ref{var2}).

For the limiting diffusion $Y_s$ we get that $\mathrm{d}Y_s = \mathrm{d}s + \mathrm{d}W_s$. For
this case, it holds that $p(s,t,x,y) = \widehat p(s,t,x,y) = (2 \uppi
(t-s))^{-1/2} \exp(-(y-x-(t-s))^2 (2(t-s))^{-1})$. We now give an
estimate for
%
\begin{eqnarray}
\label{var3} 6 h^{-1/2} \delta_1(x,y) p(kh,x,y) =
\mu_3 \int_0^{kh} \mathrm{d}u \int p(u,x,\xi)
{\partial^3 \over\partial\xi^3} p(kh-u,\xi,y) \,\mathrm{d}\xi.
\end{eqnarray}
Calculations close to the proof of (\ref{a7d}) give the following
estimate with a constant $C>0$:
\[
\biggl\llvert {\partial^3 p(t,x,y) \over\partial x^3} + {\partial^3
p(t,x,y) \over\partial y^3}\biggr\rrvert
\leq{C p(t,x,y) \over\sqrt t} \biggl( 1 + \biggl\llvert { y-x\over\sqrt t}
\biggr\rrvert^3 \biggr).
\]
Using this estimate in (\ref{var3}), we obtain
\[
6 h^{-1/2} \delta_1(x,y) p(kh,x,y) = I(x,y)+ \mathit{II}(x,y),
\]
where
%
\begin{eqnarray}\label{var3.1}
\mathit{II}(x,y)&=& - \mu_3 \int_0^{kh} \mathrm{d}u
\int p(u,x,\xi) {\partial^3 \over\partial y^3} p(kh-u,\xi,y) \,\mathrm{d}\xi
\nonumber
\\
&=& - \mu_3 {\partial^3 \over\partial y^3} \int_0^{kh}
\mathrm{d}u \int p(u,x,\xi) p(kh-u,\xi,y) \,\mathrm{d}\xi
\nonumber
\\
&=& - \mu_3 kh {\partial^3 \over\partial y^3} p(kh,x,y)
\\
&=& - \mu_3 { p(kh,x,y) \over\sqrt{kh}} \biggl( \biggl( \sqrt{kh} -
{y-x \over \sqrt{kh}} \biggr)^3- 2 \biggl( \sqrt{kh} -
{y-x \over
\sqrt{kh}} \biggr)^2 - \biggl( \sqrt{kh} -
{y-x \over \sqrt{kh}} \biggr) \biggr)
\nonumber
\\
&=& - \mu_3 { p(kh,x,y) \over\sqrt{kh}} Q_3 \biggl(
\sqrt{kh} - {y-x
\over \sqrt{kh}} \biggr)\nonumber
\end{eqnarray}
with $Q_3(z) = z^3 -2z^2-z$. For the term $I(x,y),$ we have the
following bound with a (new) constant $C>0$:
\[
\bigl|I(x,y)\bigr| \leq C \int_0^{kh} \mathrm{d}u \int
{p(u,x,\xi) p(kh-u,\xi,y) \over
\sqrt{kh-u}} \biggl( 1 + \biggl\llvert { y-x\over\sqrt{kh-u}}\biggr
\rrvert^3 \biggr) \,\mathrm{d}\xi.
\]
Using the same substitution as in the proof of Lemma~\ref{lemma4}, we
obtain the following estimate
%
\begin{eqnarray}\label{var4}
\bigl|I(x,y)\bigr| \leq C \sqrt{kh} { p(kh,x,y) } P_3 \biggl(\biggl\llvert
{y-x \over \sqrt{kh}} \biggr\rrvert \biggr)
\end{eqnarray}
for a polynomial $P_3(z) $ of degree 3 with positive coefficients. Put now
\begin{eqnarray*}
\Delta_{1,i} &=& {h^{1/2} I(Y((i-1)kh), Y(ikh)) \over 6 p(kh,
Y((i-1)kh), Y(ikh))},
\\
\Delta_{2,i} &=& {h^{1/2} \mathit{II}(Y((i-1)kh), Y(ikh)) \over6 p(kh,
Y((i-1)kh), Y(ikh))}.
\end{eqnarray*}
We will show that
%
\begin{eqnarray}
\label{var4a}\sum_{i=1}^n E\bigl[
\Delta_{1,i}^2| {\mathcal F}_{h,i-1}\bigr] &\to&0,
\qquad \mbox{in probablity},
\\
\label{var4b}\sum_{i=1}^n E\bigl[
\Delta_{2,i}^2| {\mathcal F}_{h,i-1}\bigr] &\to&
\sigma^2,\qquad  \mbox{in probablity}.
\end{eqnarray}
Because of $\Delta_{i} = \Delta_{1,i}+\Delta_{2,i}$ this shows (\ref{var2}).

We get from (\ref{var4}) with a new constant $C> 0$ that
%
\begin{eqnarray}\label{var4c}
E\bigl[\Delta_{1,i}^2| {\mathcal F}_{h,i-1}\bigr]
\leq C kh^2 E \biggl[P_3 \biggl( \biggl\llvert
{Y(ikh)-Y((i-1)kh) \over \sqrt {kh}}\biggr\rrvert \biggr)^2 \Bigl\vert {\mathcal
F}_{h,i-1} \biggr].
\end{eqnarray}

Conditionally given ${\mathcal F}_{h,i-1}$, $(Y(ikh)-Y((i-1)kh)) /
\sqrt{kh}$ has a normal distribution with mean $\sqrt{kh}$ and
variance 1. Because of $kh \to0$ (by assumption), we get that the
expectation on the right hand side of (\ref{var4c}) is uniformly
bounded, for $1 \leq i \leq n, n \geq1$. Furthermore, we have that $n
kh^2 = (n/k) (kh)^2 \to0$. Thus, (\ref{var4c}) implies (\ref{var4a}).

It remains to check (\ref{var4b}). For the proof of this claim, we
apply the explicit expression (\ref{var3.1}) and we get that
\begin{eqnarray*}
\sum_{i=1}^n E\bigl[\Delta_{2,i}^2|
{\mathcal F}_{h,i-1}\bigr] &=& {1 \over k}
\mu_3^2 \sum_{i=1}^n
E \biggl[Q_3 \biggl(\sqrt {kh} - {Y(ikh)-Y((i-1)kh) \over \sqrt{kh}}
\biggr)^2 \Bigl\vert {\mathcal F}_{h,i-1} \biggr]
\\
&=& {n \over k} \mu_3^2
{1 \over\sqrt{2 \uppi}} \int\bigl(z^6 + 2 z^4 +
z^2\bigr) \mathrm{e}^{-z^2/2}\, \mathrm{d}z
\\
&=& 22 {n \mu_3^2 \over k} .
\end{eqnarray*}
Now, because of $n/k \to c$, we get that the right-hand side of this
equation converges to $\sigma^2$. This concludes the proof.
\end{pf*}

\section{Some technical lemmas}\label{sec4}

This section collects some technical lemmas that were used in the
proofs of the last section. In all lemmas of this section, we make the
assumptions of
Theorem~\ref{theo1}.
%
\begin{lemma}\label{lemma5} For all $c > 0$ there exists a constant
$C> 0$ such that the following estimates hold for $0 \leq t-s \leq c$
%
\begin{eqnarray}
\label{a6a}\biggl\llvert \frac{\partial}{\partial x}p(t-s,x,y)\biggr\rrvert &\leq& C
\frac{%
p(t-s,x,y)}{\sqrt{t-s}}\biggl(\sqrt{t-s}+\frac{\llvert  y-x\rrvert
}{%
\sqrt{t-s}}\biggr),
\\
\label{a6}\biggl\llvert \frac{\partial}{\partial y}p(t-s,x,y)\biggr\rrvert &\leq& C
\frac{%
p(t-s,x,y)}{\sqrt{t-s}}\biggl(\sqrt{t-s}+\frac{\llvert  y-x\rrvert
}{%
\sqrt{t-s}}\biggr),
\\
\label{a7}\biggl\llvert \frac{\partial^{2}}{\partial y^{2}}p(t-s,x,y)\biggr\rrvert &\leq& C%
\frac{p(t-s,x,y)}{t-s}\biggl(1+\sqrt{t-s}+\frac{\llvert  y-x\rrvert }{%
\sqrt{t-s}}\biggr)^{2},
\\
\label{a7a}\biggl\llvert \frac{\partial^{2}}{\partial x^{2}}p(t-s,x,y)\biggr\rrvert &\leq& C%
\frac{p(t-s,x,y)}{t-s}\biggl(1+\sqrt{t-s}+\frac{\llvert  y-x\rrvert }{%
\sqrt{t-s}}\biggr)^{2},
\\
\label{a7b}\biggl\llvert \frac{\partial^{3}}{\partial x^{3}}p(t-s,x,y)\biggr\rrvert &\leq& C%
\frac{p(t-s,x,y)}{(t-s)^{3/2}}\biggl(1+\sqrt{t-s}+\frac{\llvert
y-x\rrvert }{%
\sqrt{t-s}}\biggr)^{3},
\\
\label{a7c}\biggl\llvert \frac{\partial^{4}}{\partial x^{2}\,\partial
y^{2}}p(t-s,x,y)\biggr\rrvert &\leq& C%
\frac{p(t-s,x,y)}{(t-s)^2}\biggl(1+\sqrt{t-s}+\frac{\llvert  y-x\rrvert }{%
\sqrt{t-s}}\biggr)^{4},
\\
\label{a7d}\biggl\llvert \biggl(\frac{\partial^{2}}{\partial x\,\partial y}- \frac
{\partial^{2}}{\partial x^{2}} \biggr)p(t-s,x,y)
\biggr\rrvert &\leq& C 
\frac{p(t-s,x,y)}{\sqrt{t-s}} \nonumber\\[-8pt]\\[-8pt]
&&{}\times\biggl(1+\frac{\llvert  y-x\rrvert }{\sqrt{t-s}}+
\biggl\llvert \frac{ y-x }{\sqrt{t-s}}\biggr\rrvert^2+\biggl\llvert
\frac{ y-x }{\sqrt{t-s}}\biggr\rrvert^3 \biggr).\nonumber
\end{eqnarray}
\end{lemma}
\begin{pf}
We prove the second, the
third and the last inequality. The remaining inequalities can be proved
exactly in the same way. From (\ref{a3}), we obtain%
%
\begin{eqnarray}
\label{a14}\frac{\partial}{\partial y}\widehat{p} ( t-s,x,y ) &=&-\frac{\sigma^{\prime}(y)}{\sqrt{2\uppi(t-s)}\sigma^{2}(y)}\exp \biggl[ -
\frac{%
(S(y)-S(x))^{2}}{2(t-s)}+H(y)-H(x) \biggr]
\nonumber
\\
&&{} +\frac{1}{\sqrt{2\uppi(t-s)}\sigma(y)}\exp \biggl[ -\frac
{(S(y)-S(x))^{2}}{%
2(t-s)}+H(y)-H(x) \biggr]
\nonumber
\\[-8pt]\\[-8pt]
&&\quad {} \times \biggl( H^{\prime}(y)-\frac{(S(y)-S(x))}{%
(t-s)\sigma(y)} \biggr)
\nonumber
\\
&=& \widehat{p} ( t-s,x,y ) \biggl[ -\frac{\sigma^{\prime
}(y)}{%
\sigma(y)}+H^{\prime}(y)-
\frac{S(y)-S(x)}{(t-s)\sigma(y)} \biggr] ,\nonumber
\\
\label{a15}\frac{\partial^{2}}{\partial y^{2}}\widehat{p} ( t-s,x,y ) &=& \frac{%
\partial}{\partial y}\widehat{p} (
t-s,x,y ) \biggl[ -\frac{\sigma^{\prime}(y)}{\sigma(y)}+H^{\prime}(y)-\frac{S(y)-S(x)}{(t-s)\sigma
(y)}%
\biggr]
\nonumber
\\
&&{} +\widehat{p} ( t-s,x,y ) \biggl[ \frac{ ( \sigma^{\prime
}(y) )^{2}-\sigma(y)\sigma^{\prime\prime}(y)}{\sigma^{2}(y)} 
+H^{\prime\prime}(y)\nonumber\\
&&\hphantom{{} +\widehat{p} ( t-s,x,y ) \biggl[}{}-\frac{1-\sigma^{\prime}(y)(S(y)-S(x))}{%
(t-s)\sigma^{2}(y)} \biggr]
\nonumber
\\[-8pt]\\[-8pt]
&=& \widehat{p} ( t-s,x,y ) \biggl[ -\frac{\sigma^{\prime
}(y)}{%
\sigma(y)}+H^{\prime}(y)-
\frac{S(y)-S(x)}{(t-s)\sigma(y)} \biggr]^{2}\nonumber
\\
&&{} +\widehat{p} ( t-s,x,y ) \biggl[ \frac{ ( \sigma^{\prime
}(y) )^{2}-\sigma(y)\sigma^{\prime\prime}(y)}{\sigma^{2}(y)} 
+H^{\prime\prime}(y)\nonumber
\\
&&\hphantom{{} +\widehat{p} ( t-s,x,y ) \biggl[}{}  -\frac{1}{(t-s)\sigma^{2}(y)}+\frac{(S(y)-S(x))}{(t-s)}\frac
{%
\sigma^{\prime}(y)}{\sigma^{2}(y)} \biggr].\nonumber
\end{eqnarray}
It follows from (\ref{a14}) and (\ref{a15}) and our assumptions that
%
\begin{eqnarray}
\label{a16}\biggl\llvert \frac{\partial}{\partial y}\widehat{p} ( t-s,x,y ) \biggr\rrvert &\leq& C
\frac{\widehat{p} ( t-s,x,y )
}{\sqrt{t-s}}%
\biggl( \sqrt{t-s}+\frac{\llvert  S(y)-S(x)\rrvert }{\sqrt {t-s}} \biggr) ,
\\
\label{a17}\biggl\llvert \frac{\partial^{2}}{\partial y^{2}}\widehat{p} ( t-s,x,y ) \biggr\rrvert &\leq& C
\frac{\widehat{p} (
t-s,x,y )
}{t-s} \biggl( 1+\sqrt{t-s}+\frac{\llvert  S(y)-S(x)\rrvert
}{\sqrt{t-s}}%
\biggr)^{2}.
\end{eqnarray}
It is easy to see that
%
\begin{eqnarray}
\label{a18}&&\biggl\llvert \frac{\partial}{\partial y}E\exp \biggl[ (t-s)\int_{0}^{1}g
\bigl[z_{%
\delta} \bigl( S(x),S(y ) \bigr)+\sqrt{(t-s)}B_{\delta}
\bigr]\,\mathrm{d}\delta \biggr] \biggr\rrvert
\nonumber
\\[-8pt]\\[-8pt]
&& \quad \leq C(t-s)E\exp \biggl[ (t-s)\int_{0}^{1}g
\bigl[z_{\delta} \bigl( S(x),S(y ) \bigr)+\sqrt{(t-s)}B_{\delta}
\bigr]\,\mathrm{d}\delta \biggr] ,\nonumber
\\
\label{a19}&& \biggl\llvert \frac{\partial^{2}}{\partial y^{2}}E\exp \biggl[ (t-s)\int_{0}^{1}g
\bigl[z_{\delta} \bigl( S(x),S(y ) \bigr)+\sqrt {(t-s)}B_{\delta
}
\bigr]\,\mathrm{d}\delta \biggr] \biggr\rrvert
\nonumber
\\[-8pt]\\[-8pt]
&& \quad \leq C(t-s)^2E\exp \biggl[ (t-s)\int_{0}^{1}g
\bigl[z_{\delta} \bigl( S(x),S(y ) \bigr)+\sqrt{(t-s)}B_{\delta}
\bigr]\,\mathrm{d}\delta \biggr].\nonumber
\end{eqnarray}
The second and the third inequality of the statement of the lemma now
follow from our assumptions and from (\ref{a2}), (\ref{a4}), (\ref
{a16})--(\ref{a19}).

It remains to show (\ref{a7d}). For a proof of this claim, note that
\begin{eqnarray*}
&&\biggl(\frac{\partial^{2}}{\partial x\,\partial y}- \frac{\partial^{2}}{\partial x^{2}} \biggr)\widehat p(t-s,x,y)\\
&&\quad = \widehat
p(t-s,x,y) \biggl[ \biggl(\frac{S(y) -S(x) }{(t-s) \sigma(x)} - H^{\prime}(x) \biggr)
\\
&&\hphantom{\quad = \widehat
p(t-s,x,y) \biggl[}{}\times \biggl(- \frac{\sigma^{\prime}(y)}{\sigma(y)} + H^{\prime
}(y)- H^{\prime}(x)+
\frac{(S(y) -S(x) )( \sigma^{-1}(x)-\sigma^{-1}(y))}{t-s} \biggr)
\\
&&\hphantom{\quad = \widehat
p(t-s,x,y) \biggl[}{}+ \biggl(- H^{\prime\prime}(x) - \frac{\sigma^{\prime
}(x)}{\sigma^2(x)}
\frac{(S(y) -S(x) )}{t-s} - \frac{1}{\sigma(x)} \frac{ \sigma^{-1}(x)-\sigma^{-1}(y)}{t-s} \biggr) \biggr] .
\end{eqnarray*}
Claim (\ref{a7d}) follows from our assumptions and (\ref{a2}).
\end{pf}

Put
\begin{eqnarray*}
\delta_1(x,y) = \sqrt{h} \frac{\pi_{1} (kh, x , y)}{p
(kh,x,y)}.
\end{eqnarray*}

We will also make use of the following bound.
%
\begin{lemma}\label{lemma4} There exists a constant C such that for
$x,y \in\mathbb{R}$
\[
\bigl\llvert \delta_1(x,y)\bigr\rrvert \leq\frac{C}{%
\sqrt{k}}
\biggl(1+\frac{\llvert  y-x\rrvert }{\sqrt{kh}}\biggr)^{3}.
\]
\end{lemma}
\begin{pf}
Note that by definition of
$\pi_{1}$:
%
\begin{eqnarray}\label{a5}
6h^{-1/2}\delta_1(x,y)p(kh,x,y)&=& \int_{0}^{kh}\mathrm{d}u
\int p ( u,x,\xi ) \mu_{3}(\xi)\frac{\partial^{3}}{\partial\xi^{3}}p ( kh-u,\xi ,y ) \,\mathrm{d}\xi
\nonumber
\\
&=&\int_{0}^{kh/2}\mathrm{d}u\cdots +\int_{kh/2}^{kh}\mathrm{d}u\cdots
\\
&\doteqdot& \Im_{1}+\Im_{2}.\nonumber
\end{eqnarray}
We now apply the
estimates of Lemma~\ref{lemma5} to obtain the upper bounds for $\Im_{1}$ and $\Im_{2}$ in (\ref%
{a5}). For $u\in\lbrack\frac{kh}{2},kh],$ we apply two times
integrations by parts. From our assumptions on $\mu_{3}(\xi)$ and
from (\ref{a4}), (\ref{a6}) and (\ref{a7}) we obtain that
%
\begin{eqnarray}\label{a8}
\llvert \Im_{2}\rrvert &=&\biggl\llvert \int_{kh/2}^{kh}\mathrm{d}u
\int \frac{%
\partial^{2}}{\partial\xi^{2}}\bigl[p ( u,x,\xi ) \mu_{3}(\xi)\bigr]
\frac{\partial}{\partial\xi}p ( kh-u,\xi ,y ) \,\mathrm{d}\xi\biggr\rrvert
\nonumber
\\
&\leq& \int_{kh/2}^{kh}\mathrm{d}u\int\biggl\llvert
\frac
{\partial^{2}}{\partial
\xi^{2}}\bigl[p ( u,x,\xi ) \mu_{3}(\xi )\bigr]\biggr|\biggl| \frac{\partial}{\partial\xi}p ( kh-u,\xi ,y ) \biggr\rrvert \,\mathrm{d}\xi
\nonumber
\\
&\leq& C\int_{kh/2}^{kh}\mathrm{d}u\int\frac{p ( u,x,\xi
) }{u}
\biggl( 1+\sqrt{u}+\frac{\llvert  S(\xi
)-S(x)\rrvert }{\sqrt{u}} \biggr)^{2}
\nonumber
\\[-8pt]\\[-8pt]
&&\hphantom{C\int_{kh/2}^{kh}\mathrm{d}u\int}{} \times\frac{p ( kh-u,\xi,y ) }{\sqrt{kh-u}} \biggl( \sqrt {kh-u}+%
\frac{\llvert  S(y)-S(\xi)\rrvert }{\sqrt{kh-u}}
\biggr) \,\mathrm{d}\xi
\nonumber
\\
&\leq& \frac{C}{kh}\exp[2Mkh] \int_{kh/2}^{kh}
\frac{\mathrm{d}u}{\sqrt{kh-u}}\int\widehat{p} ( u,x,\xi ) \widehat{p} ( kh-u,\xi,y )\biggl( 1+\sqrt{u}+\frac{\llvert  S(\xi)-S(x)\rrvert
}{%
\sqrt{u}} \biggr)^{2}
\nonumber
\\
&& \hphantom{\frac{C}{kh}\exp[2Mkh] \int_{kh/2}^{kh}\frac{\mathrm{d}u}{\sqrt{kh-u}}\int}{}\times  \biggl(
\sqrt{kh-u}+\frac{\llvert
S(y)-S(\xi
)\rrvert }{\sqrt{kh-u}} \biggr) \,\mathrm{d}\xi.\nonumber
\end{eqnarray}
For $u\in\lbrack0,\frac{kh}{2}]$ we get from (\ref{a4}), (\ref
{a6}) and (\ref{a7}) again by applying
integration by parts:
%
\begin{eqnarray}\label{a9}
\llvert \Im_{1}\rrvert &=&\int_{0}^{kh/2}\mathrm{d}u
\int\biggl\llvert \frac{\partial}{\partial\xi}\bigl[p ( u,x,\xi ) \mu_{3}(\xi)
\bigr]\biggr|\biggl| \frac{\partial^{2}}{\partial
\xi^{2}}%
p ( kh-u,\xi,y )
\biggr\rrvert \,\mathrm{d}\xi
\nonumber
\\
&\leq& C\int_{0}^{kh/2}\mathrm{d}u\int\frac{p ( u,x,\xi )
}{\sqrt{u}}
\biggl( \sqrt{u}+\frac{\llvert
S(\xi)-S(x)\rrvert }{\sqrt{u}} \biggr)
\nonumber
\\
&&\hphantom{C\int_{0}^{kh/2}\mathrm{d}u\int}{} \times\frac{p ( kh-u,\xi,y ) }{kh-u} \biggl( 1+\sqrt {kh-u}+\frac{%
\llvert  S(y)-S(\xi)\rrvert }{\sqrt{kh-u}}
\biggr)^{2}\,\mathrm{d}\xi
\\
&\leq& \frac{C}{kh}\exp[2Mkh] \int_{0}^{kh/2}
\frac{\mathrm{d}u}{\sqrt{u}}\int\widehat{p}%
( u,x,\xi ) \widehat{p} ( kh-u,\xi ,y
)
 \biggl( \sqrt{u}+\frac{\llvert  S(\xi)-S(x)\rrvert }{%
\sqrt{u}} \biggr) \nonumber\\
&& \hphantom{\frac{C}{kh}\exp[2Mkh] \int_{0}^{kh/2}
\frac{\mathrm{d}u}{\sqrt{u}}\int}{}\times\biggl( 1+\sqrt{kh-u}+
\frac{\llvert  S(y)-S(\xi
)\rrvert }{\sqrt{kh-u}} \biggr)^{2}\,\mathrm{d}\xi.\nonumber
\end{eqnarray}
We now use the following substitution:
\begin{eqnarray*}
u^{\prime}&=&kh-u,
\\
z(\xi)&=& \biggl( \frac{kh}{kh-u^{\prime}} \biggr)^{1/2}\frac
{(S(\xi)-S(y))}{%
\sqrt{u^{\prime}}}
\\
&& {}+ \biggl( \frac{u^{\prime}}{kh-u^{\prime}} \biggr)^{1/2}\frac{%
(S(y)-S(x))}{\sqrt{kh}}.
\end{eqnarray*}
Note that
%
\begin{eqnarray}
\label{a10}\mathrm{d}\xi&=& \bigl( kh-u^{\prime} \bigr)^{1/2}\bigl(u^{\prime}
\bigr)^{1/2} ( kh )^{-1/2}\sigma(\xi)\,\mathrm{d}z,
\\
\label{a11}z^{2}+\frac{(S(y)-S(x))^{2}}{kh}&=&\frac{kh}{(kh-u^{\prime})}\frac
{(S(\xi
)-S(y))^{2}}{(u^{\prime})}
\nonumber
\\
&&{} +\frac{(u^{\prime})}{(kh-u^{\prime})}\frac{(S(y)-S(x))^{2}}{kh}
\nonumber
\\
&& {}+2\frac{(S(\xi)-S(y))(S(y)-S(x))}{kh-u^{\prime}}+\frac
{(S(y)-S(x))^{2}}{kh%
}
\nonumber
\\[-8pt]\\[-8pt]
&=&\frac{(S(\xi)-S(y))^{2}}{u^{\prime}}+\frac{(S(\xi
)-S(y))^{2}}{%
kh-u^{\prime}}\nonumber\\
&&{}+2\frac{(S(\xi)-S(y))(S(y)-S(x))}{kh-u^{\prime}}
 +\frac{(S(y)-S(x))^{2}}{kh-u^{\prime}}
\nonumber
\\
&=&\frac{(S(\xi
)-S(y))^{2}}{u^{\prime
}}+\frac{(S(\xi)-S(x))^{2}}{kh-u^{\prime}} .\nonumber
\end{eqnarray}
From (\ref{a10}) and (\ref{a11}), we get that
\begin{eqnarray}\label{a12}
\llvert \Im_{1}\rrvert &\leq& \frac{C}{kh}\exp[2Mkh]\exp \bigl[
H(y)-H(x) \bigr] \nonumber\\
&&{}\times\int_{kh/2}^{kh}\frac{\mathrm{d}u^{\prime}}{\sqrt{%
kh-u^{\prime}}}
\int\frac{1}{\sqrt{2\uppi(kh-u^{\prime})}\sigma(\xi)}\frac
{1}{%
\sqrt{2\uppi(u^{\prime})}\sigma(y)}
\nonumber
\\
&& \hphantom{{}\times\int_{kh/2}^{kh}\frac{\mathrm{d}u^{\prime}}{\sqrt{%
kh-u^{\prime}}}
\int}{}\times\exp \biggl[ -\frac{(S(\xi)-S(x))^{2}}{2(kh-u^{\prime
})}-\frac{%
(S(y)-S(\xi))^{2}}{2(u^{\prime})} \biggr] \nonumber\\
&&\hphantom{{}\times\int_{kh/2}^{kh}\frac{\mathrm{d}u^{\prime}}{\sqrt{%
kh-u^{\prime}}}
\int}{}\times
\biggl( \sqrt{%
kh-u^{\prime}}+\frac{\llvert  S(\xi)-S(x)\rrvert }{\sqrt{%
kh-u^{\prime}}} \biggr)
\nonumber
\\
&&\hphantom{{}\times\int_{kh/2}^{kh}\frac{\mathrm{d}u^{\prime}}{\sqrt{%
kh-u^{\prime}}}
\int} {}\times \biggl( 1+\sqrt{u^{\prime}}+\frac{\llvert  S(y)-S(\xi
)\rrvert }{\sqrt{u^{\prime}}}
\biggr)^{2}\,\mathrm{d}\xi
\nonumber
\\
&\leq& \frac{C}{kh}%
\exp[2Mkh] \frac{\exp [ H(y)-H(x) ] }{\sqrt{2\uppi kh}\sigma(y)}\exp \biggl(
-\frac{(S(y)-S(x))^{2}}{2kh} \biggr)
\nonumber
\\
&&{} \times \int_{kh/2}^{kh}\frac{\mathrm{d}u^{\prime}}{\sqrt{kh-u^{\prime}}}\int
\frac
{\mathrm{d}z%
}{\sqrt{2\uppi}}\exp \biggl( -\frac{z^{2}}{2} \biggr)
\\
&&{} \times \biggl( 1+\sqrt{u^{\prime}}+\sqrt{\frac{u^{\prime}}{%
kh}}
\frac{\llvert  S(y)-S(x)\rrvert }{\sqrt{kh}}+\llvert z\rrvert \sqrt{\frac{kh-u^{\prime}}{kh}}
\biggr)^{2}
\nonumber
\\
&&{} \times \biggl( \sqrt{kh-u^{\prime}}+\sqrt{\frac{kh-u^{\prime
}}{kh}}
\frac{%
\llvert  S(y)-S(x)\rrvert }{\sqrt{kh}}+\llvert z\rrvert \sqrt{%
\frac{u^{\prime}}{kh}}
\biggr)
\nonumber
\\
&\leq& \frac{C}{kh}\exp[2Mkh]\widehat{p}(kh,x,y)\int_{kh/2}^{kh}%
\frac{\mathrm{d}u^{\prime}}{\sqrt{kh-u^{\prime}}}\int\frac{\mathrm{d}z}{\sqrt{2\uppi
}}\exp \biggl( -\frac{z^{2}}{2}
\biggr)
\nonumber
\\
&& {}\times \biggl( 1+\sqrt{kh}+\frac{\llvert  S(y)-S(x)\rrvert
}{\sqrt{kh}}%
+\llvert z\rrvert
\biggr)^{2} \biggl( \sqrt{kh}+\frac{\llvert
S(y)-S(x)\rrvert }{\sqrt{kh}}+\llvert z\rrvert
\biggr)
\nonumber
\\
&\leq& \frac{C}{\sqrt{kh}}\exp[2Mkh]\widehat{p}(kh,x,y)\int\frac
{\mathrm{d}z%
}{\sqrt{2\uppi}}
\exp \biggl( -\frac{z^{2}}{2} \biggr) \biggl( 1+\sqrt{kh}+
\frac{\llvert  S(y)-S(x)\rrvert }{\sqrt {kh}}%
+\llvert z\rrvert \biggr)^{3}
\nonumber
\\
&\leq& \frac{C}{\sqrt{kh}}%
p(kh,x,y) \biggl( 1+\frac{\llvert  S(y)-S(x)\rrvert }{\sqrt{kh}}
\biggr)^{3}.\nonumber
\end{eqnarray}
By similar calculations, we obtain that
%
\begin{equation}\label{a13}
\llvert \Im_{2}\rrvert \leq\frac{C}{\sqrt{kh}}p(kh,x,y)%
\biggl( 1+\frac{\llvert  S(y)-S(x)\rrvert }{\sqrt{kh}} \biggr)^{3}.
\end{equation}
The lemma now follows from our assumptions on $\sigma,$ (\ref%
{a5}), (\ref{a12}) and (\ref{a13}).
\end{pf}

We will also make use of the following bound.
%
\begin{lemma}\label{lemma4aa}
For any polynomials $P_l(x)$ and $P_m(x)
$ of degrees $l$ and $m$, there exists a constant $C$, depending only
on $l$, $m$ and the coefficients of the polynomials, such that
uniformly for $w \in[0, kh/4)$ the following inequalities hold
\begin{eqnarray*}
&&\int_0^{kh/4} u^{-1/2} \int p(u,x,z)
P_l \biggl(\biggl\llvert \frac
{z-x}{\sqrt{u}}\biggr\rrvert \biggr)
p(kh -w-u,z,y) P_m \biggl(\biggl\llvert \frac
{y-z}{\sqrt{kh -w-u}}\biggr
\rrvert \biggr) \,\mathrm{d}z \,\mathrm{d}u
\\
&&\quad  \leq C \sqrt{kh} p(kh-w,x,y) \biggl( 1 + \biggl\llvert \frac{y-x}{\sqrt{kh -w}}
\biggr\rrvert^{l+m} \biggr).
\end{eqnarray*}
\end{lemma}
\begin{pf}
These bounds can be
easily shown by using the representation
(\ref{a2}) and calculations of similar convolution integrals for
Gaussian densities.
\end{pf}

Put
\begin{eqnarray}
\delta_{2}(x,y)=h\frac{\pi_{2}(kh,x,y)}{p(kh,x,y)}.
\nonumber
\end{eqnarray}
We now state a bound for $\delta_{2}(x,y)$.
%
\begin{lemma}\label{lemma4b}
There exists a constant C such that for
$x,y \in\mathbb{R}$
\[
\bigl\llvert \delta_{2}(x,y)\bigr\rrvert \leq\frac{C}{k}
\biggl[ 1+ \biggl( \frac{%
\llvert  y-x\rrvert }{\sqrt{kh}} \biggr)^{7} \biggr].
\]
\end{lemma}
\begin{pf}
Note that the function $\pi_{2}(kh,x,y)$ can
be written as%
\[
\pi_{2}(kh,x,y)=\Im_{3} + \Im_{4},
\]
where
\[
\Im_{3} = \sum_{i=1}^{4}\int
_{0}^{kh}\mathrm{d}u\int p(u,x,\xi)f_{i}(
\xi)%
\frac{\partial^{i}}{\partial\xi^{i}}p(kh-u,\xi,y)\,\mathrm{d}\xi,
\]
with $f_{4}(\xi)=\mu_{4}(\xi)-3\sigma^{4}(\xi)$ and $f_{i}(\xi
),i=1,2,3,$ depending on the coefficients of the operator $L$ and their
derivatives up to the order 2. Furthermore, the term $\Im_{4}$ is
defined as
\begin{eqnarray*}
\Im_{4} &=& { 1 \over36}\int_{u+w \leq kh; u,w \geq0} p
\bigl(u,x, \xi^*\bigr) \mu_3\bigl(\xi^*\bigr)\frac{\partial^{3}}{(\partial\xi^*)^{3}} p
\bigl(kh-u-w, \xi^*, \xi\bigr) \mu_3(\xi) \\
&&\hphantom{{ 1 \over36}\int_{u+w \leq kh; u,w \geq0}}{}\times \frac{\partial^{3}}{(\partial\xi)^{3}} p(w,
\xi, y) \,\mathrm{d}\xi \,\mathrm{d} \xi^* \,\mathrm{d}u \,\mathrm{d}w.
\end{eqnarray*}

Applying the same arguments as in the proof
of Lemma~\ref{lemma4}, we get
\[
\Biggl\llvert \sum_{i=1}^{3}\int
_{0}^{kh}\mathrm{d}u\int p(u,x,\xi)f_{i}(\xi )
\frac{%
\partial^{i}}{\partial\xi^{i}}p(kh-u,\xi,y)\,\mathrm{d}\xi\Biggr\rrvert \leq \frac{C%
}{\sqrt{kh}}p(kh,x,y)
\biggl( 1+\frac{\llvert  y-x\rrvert
}{\sqrt{kh}}%
\biggr)^{3}.
\]
For $i=4,$ we have to estimate the integral%
\[
\int_{0}^{kh}\mathrm{d}u\int p(u,x,\xi)f_{4}(
\xi)\frac{\partial^{4}}{\partial\xi^{4}}p(kh-u,\xi,y)\,\mathrm{d}\xi.
\]
With calculations very similar to the ones used in the proof of Lemma~\ref{lemma4} we get%
%
\begin{equation}
\label{claimaa} \llvert \Im_{3}\rrvert \leq\frac{C}{kh}p(kh,x,y)
\biggl[ 1+ \biggl( \frac{\llvert  y-x\rrvert }{\sqrt{kh}} \biggr)^{4} \biggr] .
\end{equation}
It remains to bound $\Im_{4}$. We write
\[
\Im_{4} = \Im_{4a} + \Im_{4b}+
\Im_{4c},
\]
where
\begin{eqnarray*}
\Im_{4a} &=& { 1 \over36}\int_{I_a }
\cdots \,\mathrm{d}\xi \,\mathrm{d} \xi^* \,\mathrm{d}u \,\mathrm{d}w,
\\
\Im_{4b} &=& { 1 \over36}\int_{I_b }
\cdots \,\mathrm{d}\xi \,\mathrm{d} \xi^* \,\mathrm{d}u \,\mathrm{d}w,
\\
\Im_{4c} &=& { 1 \over36}\int_{I_c}
\cdots \,\mathrm{d}\xi \,\mathrm{d} \xi^* \,\mathrm{d}u \,\mathrm{d}w,
\\
I_a&=& \bigl\{ \bigl(u,w,\xi, \xi^*\bigr)\dvtx  u,w,\xi, \xi^* \in
\mathbb{R}; u+w \leq kh; 0 \leq u; kh/4 \leq w \bigr\},
\\
I_b&=& \bigl\{ \bigl(u,w,\xi, \xi^*\bigr)\dvtx  u,w,\xi, \xi^* \in
\mathbb{R}; u+w \leq kh; kh/4 \leq u; 0 \leq w < kh/4\bigr\},
\\
I_c&=& \bigl\{ \bigl(u,w,\xi, \xi^*\bigr)\dvtx  u,w,\xi, \xi^* \in
\mathbb{R}; u+w \leq kh; 0 \leq u < kh/4; 0 \leq w < kh/4 \bigr\}.
\end{eqnarray*}
We now show that for some constant $C>0$
%
\begin{equation}
\label{claimbbc} \llvert \Im_{4c}\rrvert \leq\frac{C}{kh}p(kh,x,y)
\biggl[ 1+ \biggl( \frac{\llvert  y-x\rrvert }{\sqrt{kh}} \biggr)^{4} \biggr] .
\end{equation}
For this estimate one applies the following bound that follows by
partial integration:
\begin{eqnarray*}
|\Im_{4c} |&\leq& { 1 \over36}\biggl\llvert \int
_{I_c} \frac{\partial
}{\partial\xi^*} \bigl[p\bigl(u,x, \xi^*\bigr)
\mu_3\bigl(\xi^*\bigr)\bigr]\frac{\partial^{4}}{(\partial\xi^*)^{2}(\partial\xi)^{2}} p\bigl(kh-u-w, \xi^*,
\xi\bigr) \mu_3(\xi) \\
&&\hphantom{{ 1 \over36}\biggl\llvert \int
_{I_c}}{}\times \frac{\partial}{\partial\xi} p(w, \xi, y) \,\mathrm{d}\xi \,\mathrm{d} \xi^* \,\mathrm{d}u \,\mathrm{d}w\biggr\rrvert .
\end{eqnarray*}
The integrand can be bounded with the help of (\ref{a6a}), (\ref{a6})
and (\ref{a7c}). Because of the bounds of Lemma~\ref{lemma4aa} this
implies (\ref{claimbbc}).

To bound $\Im_{4a}$ we use that:
\begin{eqnarray*}
36 \llvert \Im_{4a} \rrvert &\leq& \biggl\llvert \int
_{I_a} \frac
{\partial}{\partial\xi^*} \bigl[p\bigl(u,x, \xi^*\bigr)
\mu_3(\xi)\bigr] \biggl[ \frac{\partial^{2}}{(\partial\xi^*)^{2}} - \frac{\partial^{2}}{(\partial\xi^*)(\partial\xi)}
\biggr] p\bigl(kh-u-w, \xi^*, \xi\bigr) \mu_3(\xi)
\\
&&\hphantom{\biggl\llvert \int
_{I_a}}{}  \times\frac{\partial^3}{(\partial\xi)^3} p(w, \xi, y) \,\mathrm{d}\xi \,\mathrm{d} \xi^* \,\mathrm{d}u \,\mathrm{d}w\biggr
\rrvert
\\
&& {}+ \biggl\llvert \int_{I_a} \frac{\partial}{\partial\xi^*} \bigl[p
\bigl(u,x, \xi^*\bigr) \mu_3(\xi)\bigr] \frac{\partial}{\partial\xi^*} p
\bigl(kh-u-w, \xi^*, \xi\bigr) \frac{\partial}{\partial\xi} \mu_3(\xi)
\\
&&\hphantom{{}+ \biggl\llvert \int_{I_a}}{}  \times\frac{\partial^3}{(\partial\xi)^3} p(w, \xi, y) \,\mathrm{d}\xi \,\mathrm{d} \xi^* \,\mathrm{d}u \,\mathrm{d}w\biggr
\rrvert
\\
&& {}+ \biggl\llvert \int_{I_a} \frac{\partial}{\partial\xi^*} \bigl[p
\bigl(u,x, \xi^*\bigr) \mu_3(\xi)\bigr] \frac{\partial^2}{(\partial\xi^*)(\partial\xi)} \bigl[ p
\bigl(kh-u-w, \xi^*, \xi\bigr) \mu_3(\xi)\bigr]
\\
&&\hphantom{{}+ \biggl\llvert \int_{I_a}}{}  \times\frac{\partial^3}{(\partial\xi)^3} p(w, \xi, y) \,\mathrm{d}\xi \,\mathrm{d} \xi^* \,\mathrm{d}u \,\mathrm{d}w\biggr
\rrvert
\\
&=& \Im_{4aa}+ \Im_{4ab}+ \Im_{4ac}.
\end{eqnarray*}
These terms can be easily bounded by using the bounds of Lemma~\ref
{lemma5}. Because of the bounds of Lemma~\ref{lemma4aa}, this implies
%
\begin{equation}
\label{claimbba} \llvert \Im_{4a}\rrvert \leq\frac{C}{kh}p(kh,x,y)
\biggl[ 1+ \biggl( \frac{\llvert  y-x\rrvert }{\sqrt{kh}} \biggr)^{7} \biggr] .
\end{equation}
To get a bound for $\Im_{4ac}$ we use that by partial integration:
\begin{eqnarray*}
\Im_{4ac} &=& \biggl\llvert \int_{I_a}
\frac{\partial}{\partial\xi^*} \bigl[p\bigl(u,x, \xi^*\bigr) \mu_3(\xi)\bigr]
\frac{\partial}{\partial\xi^*} \bigl[ p\bigl(kh-u-w, \xi^*, \xi\bigr) \mu_3(\xi)
\bigr]
\\
&& \hphantom{\biggl\llvert \int_{I_a}}{} \times\frac{\partial^4}{(\partial\xi)^4} p(w, \xi, y) \,\mathrm{d}\xi \,\mathrm{d} \xi^* \,\mathrm{d}u \,\mathrm{d}w\biggr
\rrvert .
\end{eqnarray*}
Similarly one shows that
%
\begin{equation}
\label{claimbbb} \llvert \Im_{4b}\rrvert \leq\frac{C}{kh}p(kh,x,y)
\biggl[ 1+ \biggl( \frac{\llvert  y-x\rrvert }{\sqrt{kh}} \biggr)^{7} \biggr] .
\end{equation}
The statement of Lemma~\ref{lemma4b} follows now from (\ref
{claimaa}), (\ref{claimbba}), (\ref{claimbbb}) and (\ref{claimbbc}).
\end{pf}

\section*{Acknowledgements}
This study was carried out within ``The National Research University
Higher School of Economics''
Academic Fund Program in 2012--2013, research Grant 11-01-0083.
Support by Grant 436RUS113/467/81-2
from the Deutsche Forschungsgemeinschaft is also acknowledged. The
research of Enno Mammen and Jeannette
Woerner was supported by the German Science Foundation (DFG) in the
framework of the German--Swiss Research
Group FOR 916 ``Statistical Regularization and Qualitative Constraints''.



\printhistory

\end{document}